\newcommand\inverse{{^{-1}}}
\newcommand\ra{\rightarrow}
\newcommand{\FF}{{\mathbb F}}
\newcommand{\tuple}[1]{{\mathbf {#1}}}
\DeclareMathOperator{\Int}{Int}
\DeclareMathOperator{\Aut}{Aut}
\DeclareMathOperator{\Out}{Out}
\DeclareMathOperator{\GL}{GL}
\DeclareMathOperator{\SL}{SL}
\DeclareMathOperator{\PGL}{PGL}
\DeclareMathOperator{\Mat}{Mat}
\numberwithin{equation}{section}
\newtheorem{thm}[equation]{Theorem}
\newtheorem*{Thm}{Theorem}
\newtheorem{lem}[equation]{Lemma}
\newtheorem{cor}[equation]{Corollary}
\newtheorem{prop}[equation]{Proposition}
\newtheorem*{Alg}{Algorithm}
\theoremstyle{definition}
\newtheorem{defn}[equation]{Definition}
\newtheorem{exmp}[equation]{Example}
\theoremstyle{remark}
\newtheorem{rem}[equation]{Remark}
\theoremstyle{remark}
\subjclass[2010]{20G15} %(14L24)}
\keywords{$G$-complete reducibility, non-connected reductive groups}
\title[$G$-complete reducibility in non-connected groups]
{$G$-complete reducibility in non-connected groups}
\author[M.\  Bate]{Michael Bate}
\address%[M.\  Bate]
{Department of Mathematics,
University of York,
York YO10 5DD,
United Kingdom}
\email{michael.bate@york.ac.uk}
\author[S. Herpel]{Sebastian Herpel}
\address%[S. Herpel]
{Fachbereich Mathematik,
Technische Universit\"at Kaiserslautern,
Postfach 3049,
67653 Kaiserslautern, Germany}
\email{herpel@mathematik.uni-kl.de}
\author[B.\ Martin]{Benjamin Martin}
\address%[B.\ Martin]
{Department of Mathematics,
University of Auckland,
Private Bag 92019,
Auckland 1142,
New Zealand}
\email{Ben.Martin@auckland.ac.nz}
\author[G. R\"ohrle]{Gerhard R\"ohrle}
\address%[G.~R\"{o}hrle]
{Fakult\"at f\"ur Mathematik,
Ruhr-Universit\"at Bochum,
D-44780 Bochum, Germany}
\email{gerhard.roehrle@rub.de}
\begin{document}

\begin{abstract}
In this
paper we present an algorithm for determining whether a subgroup $H$ of
a non-connected reductive group $G$ is $G$-completely reducible.  The algorithm consists of
a series of reductions; at each step, we perform operations involving
connected groups, such as checking whether a certain subgroup of $G^0$
is $G^0$-cr.  This essentially reduces the problem of determining
$G$-complete reducibility to the connected case.
\end{abstract}

\maketitle

%\setcounter{tocdepth}{1}
%\tableofcontents

\section{Introduction}
\label{sec:intro}

Let $G$
be a connected reductive linear algebraic group over an algebraically
closed field of characteristic 0 or characteristic $p>0$.  
Following Serre \cite{serre1}, we say a
subgroup $H$ of $G$ is \emph{$G$-completely reducible} ($G$-cr) if
whenever $H$ is contained in a parabolic subgroup $P$ of $G$, then $H$ is
contained in some Levi subgroup of $P$.  The definition extends to
non-connected reductive $G$ as well: one replaces parabolic and Levi
subgroups with so-called Richardson parabolic and Richardson Levi
subgroups respectively (see \cite{BMR}, \cite{martin1}
and Section \ref{sec:prelims}).

Even if one is interested mainly in connected reductive groups, one must
sometimes consider non-connected groups.  For instance, natural subgroups
of a connected group, such as normalizers and centralizers, are often
non-connected.  The notion of 
$G$-complete reducibility is much better understood in
the connected case, e.g.,\ see \cite{BHMR}, \cite{liebeckseitz0},
and \cite{liebeckseitz}.
In this
paper we present an algorithm for determining whether a subgroup $H$ of
a non-connected reductive group $G$ is $G$-cr.  The algorithm consists of
a series of reductions; at each step, we perform operations involving
connected groups, such as checking whether a certain subgroup of $G^0$
is $G^0$-cr.  This essentially reduces the problem of determining
$G$-complete reducibility to the connected case.

An important special case of the general problem described above is the
following.  Let $H$ be a subgroup of $G$.  We say $H$\emph{acts on $G^0$
by outer automorphisms} if for each $1\neq h\in H$, conjugation by $h$
gives a non-inner automorphism of $G^0$.  In this case, we may identify
$H$ with a subgroup of $\Out(G^0)$.  Now suppose also that $G^0$ is
simple; then $H$ is cyclic except for possibly when $G^0$ is of type
$D_4$.  It is convenient when studying conjugacy classes in $G^0$ to
determine the fixed point set of a non-inner automorphism.
See e.g.,\ \cite[Lem.\ 2.9]{liebeckseitz:2012} when $H$ is cyclic and semisimple (that is, $H$ is of
order coprime to $p$); note that if $H$ is generated by a semisimple
element then $H$ is $G$-cr by Theorem \ref{thm:geometric},
as semisimple conjugacy classes are closed.  
On the other hand, if $H$ is cyclic
and unipotent (that is, $H$ is a $p$-group) then $H$ can be $G$-cr or non-$G$-cr.

We prove the following result, which gives a criterion for $G$-complete
reducibility of $H$.  It is an ingredient in our algorithm.
In case $H$ is cyclic, this is a special case of a recent result
due to Guralnick and Malle, cf.\ Theorem \ref{thm:GM}.

\begin{Thm} [Corollary \ref{cor:outs_on_simple}]
\label{thm:simpleG0}
  Suppose $G^0$ is simple and $H$ acts on $G^0$ by outer automorphisms.
Then $H$ is $G$-completely
reducible if and only if $C_{G^0}(H)$ is reductive. 
\end{Thm}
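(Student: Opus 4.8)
I would prove the two implications separately, after recording some structural facts. Since every nontrivial $h\in H$ acts as a non-inner automorphism of $G^0$, we have $H\cap G^0=1$ and $H$ embeds into $\Out(G^0)$; as $G^0$ is simple, this is the finite group of Dynkin diagram symmetries, so $H$ is finite. Moreover $\Out(G^0)$ is cyclic unless $G^0$ has type $D_4$, where $\Out(G^0)\iso S_3$ and the only non-cyclic subgroup is $S_3$ itself. This dichotomy is what lets me reduce the non-cyclic situation to a single type.

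For the forward implication, suppose $H$ is $G$-completely reducible. By the general theory of $G$-complete reducibility \cite{BMR}, valid for non-connected $G$, the centralizer $C_G(H)$ is then again $G$-cr, and the identity component of a $G$-cr subgroup is itself $G$-cr, hence reductive. Since $C_G(H)^0=C_{G^0}(H)^0$ (the left-hand side is connected and centralizes $H$, so lies in $C_{G^0}(H)^0$, and the reverse inclusion is clear), it follows that $C_{G^0}(H)$ is reductive. This half uses neither simplicity nor the outer hypothesis beyond forcing $H\cap G^0=1$.

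For the converse, assume $C_{G^0}(H)$ is reductive. If $p\nmid|H|$ (in particular in characteristic $0$), then $H$ is linearly reductive, hence $G$-cr for any reductive $G$ by \cite{BMR}, so there is nothing to prove; indeed $C_{G^0}(H)$ is automatically reductive here by Steinberg's fixed-point theorem. So assume $p\mid|H|$. If $H$ is cyclic, the desired equivalence is exactly the Guralnick--Malle result, Theorem \ref{thm:GM}. By the structural reduction above, the only case not yet covered is $G^0$ of type $D_4$, $H\iso S_3$, and $p\in\{2,3\}$.

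The remaining $D_4$ case is the crux, and I would argue it by contraposition. Supposing $H$ is not $G$-cr, I would use the Kempf optimal destabilizing cocharacter $\lambda$ for a generating tuple of $H$ --- available because $G$ has finitely many components, cf.\ Theorem \ref{thm:geometric} --- to produce a Richardson parabolic $P_\lambda\supseteq H$ admitting no Richardson Levi through $H$. Canonicity of the optimal class lets one take $\lambda$ normalized by $H$, so that $C_{G^0}(H)$ meets the Levi--unipotent decomposition of $P_\lambda$ and one can locate a nontrivial unipotent element in $C_{G^0}(H)\cap R_u(P_\lambda)$, forcing $R_u(C_{G^0}(H))\neq 1$. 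Alternatively, in the spirit of the paper's algorithm, one can peel off the normal subgroup $\langle\text{triality}\rangle\iso\ZZ/3$ of $S_3$, reducing $G$-complete reducibility of $S_3$ to that of its cyclic subgroups (handled by Theorem \ref{thm:GM}) together with the induced action on the relevant centralizer. Either way, I expect the genuine obstacle to be the explicit behaviour of $C_{G^0}(H)$ in characteristics $2$ and $3$: it is a group of type $G_2$ in good characteristic but degenerates when $p\in\{2,3\}$, and one must verify directly that its failure to be reductive corresponds exactly to $H$ failing to lie in a Richardson Levi of the destabilizing Richardson parabolic.
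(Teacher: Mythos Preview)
Your forward implication and the reduction to the cyclic case are correct and essentially agree with the paper: the paper states the forward direction as Lemma~\ref{lem:2_nec_cond}(ii), and for cyclic $H$ it invokes its own Theorem~\ref{thm:cyclic} (cyclic of \emph{prime} order, proved there without case-by-case analysis) rather than Theorem~\ref{thm:GM}. Either works, since every nontrivial cyclic subgroup of $\Out(G^0)$ for simple $G^0$ has prime order.

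The gap is the $D_4$, $H\cong S_3$ case. Your first approach does not go through as written. If the optimal $\lambda$ were genuinely $H$-fixed you would have $H\subseteq L_\lambda$, contradicting optimality; what Theorem~\ref{thm:optimal} gives is only $N_G(H)\subseteq P_\lambda$, hence $C_{G^0}(H)\subseteq P_\lambda^0$, and there is no general mechanism that then produces a nontrivial element of $C_{G^0}(H)\cap R_u(P_\lambda)$, let alone one normal in $C_{G^0}(H)$. (Indeed the paper's second example in Section~\ref{sec:ex} shows a non-$G$-cr $H$ with $C_{G^0}(H)=1$, albeit with $G^0$ not simple.) Your second approach is in the right spirit but glosses over substantial work that the paper carries out in Theorem~\ref{thm:D4}, and the two characteristics behave quite differently. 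For $p\neq 3$ the order-$3$ subgroup $K\lhd H$ is linearly reductive, so one passes to $M=HC_{G^0}(K)$, uses \cite[Thm.~3.1(b)(ii)]{BMR2} to see that $H$ is not $M$-cr, and then applies the cyclic case to $H/K\cong\ZZ/2$ inside $M/K$, concluding via $C_{G^0}(H)\cong C_{(M/K)^0}(H/K)$. For $p=3$ one cannot peel off $K$ that way; instead Theorem~\ref{thm:normal}(ii) forces $K$ itself to be non-$G$-cr, and the paper then needs the explicit classification of order-$3$ outer automorphisms of $D_4$ in characteristic~$3$ (Proposition~\ref{prop:triality}) to identify $K$ with the ``bad'' triality $\langle\sigma u\rangle$, followed by a concrete root-group computation inside $C_{D_4}(\sigma)\cong G_2$ to exhibit a positive-dimensional unipotent piece of $C_{G^0}(H)$. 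Finally, your aside that $C_{G^0}(H)$ ``is a group of type $G_2$ in good characteristic'' is not accurate: $G_2$ is $C_{G^0}(\langle\sigma\rangle)$, not $C_{G^0}(S_3)$.
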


Our work fits into a study begun in our earlier papers \cite{BMR},
\cite{BMR2}.  It was shown in \cite[Thm.~3.10]{BMR} that if $H$ is a
$G$-cr subgroup of $G$ and $N$ is a normal subgroup of $H$ then $N$ is
also $G$-cr.  In \cite{BMR2} we considered a complementary question: if
$H$ is a subgroup of $G$, $N$ is a normal subgroup of $H$ and $N$ is
$G$-cr then under what hypotheses is $H$ also $G$-cr?  We gave an
example (due to Liebeck) with $H$ of the form $M\times N$, where $M$
and $N$ are both $G$-cr but $H$ is not \cite[Ex.\ 5.3]{BMR2}.  We also showed
this kind of pathological behaviour does not happen when $G$ is
connected and $p$ is good for $G$ \cite[Thm.\ 1.3]{BMR2}.  Here we study the 
above question in the case when $N$ is the normal subgroup $H\cap G^0$ 
of $H$ (see the algorithm in Theorem \ref{thm:algorithm}).

\section{Preliminaries}
\label{sec:prelims}

\subsection{Notation}
Throughout, we work over an algebraically closed field $k$
of characteristic $p \geq0$;
we let $k^*$ denote the multiplicative
group of $k$.
Let $H$ be a linear algebraic group.  
By a subgroup of $H$ we mean a closed subgroup.
We let $Z(H)$ denote the centre of $H$
and $H^0$ the connected component of $H$ that contains $1$.
For $h \in H$, we let $\Int_h$ denote the automorphism of $H$
given by conjugation with $h$.
Frequently, we abbreviate $\Int_h(g)$ by $h \cdot g$. If $S$ is
a subset of $H$ and $K$ is a subgroup of $H$, 
then $C_K(S)$ denotes the centralizer of $S$ in $K$ and
$N_K(S)$ the normalizer of $S$ in $K$. 
Likewise, if $S$ is a group of algebraic automorphisms of 
$H$, then we denote the fixed point subgroup of $S$ in $H$ by 
$C_H(S)$.
If $H$ acts on a set $X$, then 
we also write $C_H(x)$ for the stabilizer
of a point $x \in X$ in $H$.

For the set of cocharacters (one-parameter subgroups) of $H$ we write
$Y(H)$; the elements of $Y(H)$ are the homomorphisms from $k^*$ to $H$.  

The \emph{unipotent radical} of $H$ is denoted $R_u(H)$; it is the
maximal connected normal unipotent subgroup of $H$.  The algebraic
group $H$ is called \emph{reductive} if $R_u(H) = \{1\}$; note that we
do not insist that a reductive group is connected.  In particular, $H$
is reductive if it is simple as an algebraic group.
Here, $H$ is said to be
\emph{simple} if $H$ is connected and all proper normal subgroups of
$H$ are finite.  The algebraic group $H$ is called \emph{linearly
reductive} if all rational representations of $H$ are semisimple.

Throughout the paper $G$ denotes a reductive algebraic group, possibly
non-connected.

\begin{defn}
Let $H \subseteq G$ be a subgroup. We say that
\emph{$H$ acts on $G^0$ by outer automorphisms} 
if for every $1 \neq h \in H$, the automorphism
$\Int_h|_{G^0}$ of $G^0$ is non-inner, i.e.,\ 
is not given by conjugation with an element of $G^0$.
This is equivalent to the condition that
$H$ maps bijectively onto its image under the
natural map
$G \rightarrow \Aut(G^0) \rightarrow \Out(G^0)$.
\end{defn}

\subsection{\texorpdfstring{$G$}{G}-Complete Reducibility}
\label{subsec:noncon}

In \cite[\S6]{BMR}, Serre's original notion of $G$-complete
reducibility is extended to include the case when $G$ is reductive but
not necessarily connected (so that $G^0$ is a connected reductive
group).  The crucial ingredient of this extension is the use
of so-called \emph{Richardson-parabolic subgroups} (\emph{R-parabolic
subgroups}) of $G$. 
We briefly recall the main definitions here; for
more details on this formalism, see \cite[\S6]{BMR}.

For a cocharacter $\lambda
\in Y(G)$, the \emph{R-parabolic subgroup} corresponding to $\lambda$
is defined by $$P_\lambda := \{ g\in G \mid \underset{a\to 0}{\lim}\,
\lambda(a)g\lambda(a)\inverse \textrm{ exists}\}.$$ 
Here, for a morphism of algebraic varieties $\phi:k^*\rightarrow X$,
we say that $\lim_{a \to 0} \phi(a)$ exists provided
that $\phi$ extends to a morphism $\widehat{\phi}:k \rightarrow X$;
in this case we set $\lim_{a \to 0} \phi(a) = \widehat{\phi}(0)$.
Then $P_\lambda$
admits a Levi decomposition $P_\lambda = R_u(P_\lambda) \rtimes
L_\lambda$, where $$L_\lambda = \{ g\in G \mid \underset{a\to
0}{\lim}\, \lambda(a)g\lambda(a)\inverse = g \} = C_G(\lambda(k^*)).$$
We call $L_\lambda$
an \emph{R-Levi subgroup} of $P_\lambda$.  For an R-parabolic subgroup
$P$ of $G$, the different R-Levi subgroups of $P$ correspond in this
way to different choices of $\lambda \in Y(G)$ such that $P =
P_\lambda$; moreover, the R-Levi subgroups of $P$ are all conjugate
under the action of $R_u(P)$.  An R-parabolic subgroup $P$ is a
parabolic subgroup in the sense that $G/P$ is a complete variety; the
converse is true when $G$ is connected, but not in general
(\cite[Rem.\ 5.3]{martin1}).  

\begin{rem}
For a subgroup $H$ of $G$, there is a natural inclusion $Y(H)
\subseteq Y(G)$.  If $\lambda \in Y(H)$, and $H$ is reductive, we can
therefore associate to $\lambda$ an R-parabolic subgroup of $H$ as
well as an R-parabolic subgroup of $G$.  To avoid confusion, we
reserve the notation $P_\lambda$ for R-parabolic subgroups of $G$, and
distinguish the R-parabolic subgroups of $H$ by writing $P_\lambda(H)$
for $\lambda \in Y(H)$.  The notation $L_\lambda(H)$ has the obvious
meaning.  Note that $P_\lambda(H) = P_\lambda \cap H$ and
$L_\lambda(H) = L_\lambda \cap H$ for $\lambda \in Y(H)$.
In particular, $P_\lambda^0=P_\lambda(G^0)$ and 
$L_\lambda^0 = L_\lambda(G^0)$.  If $\lambda\in Y(H)$ then the R-Levi subgroups of $P_\lambda(H)$ are the $R_u(P_\lambda(H))$-conjugates of $L_\lambda(H)$; in particular, any R-Levi subgroup of $P_\lambda(H)$ is of the form $L\cap H$ for some R-Levi subgroup $L$ of $P_\lambda$.
\end{rem}

For later use, we record the following way to construct R-Levi subgroups.

\begin{lem} \label{lem:normalizer}
Let $P$ be an R-parabolic subgroup of $G$,
and let $M$ be a Levi subgroup of $P^0$.
Then $N_P(M)$ is an R-Levi subgroup of $P$.
\end{lem}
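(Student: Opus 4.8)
The plan is to produce an explicit R-Levi subgroup $L$ of $P$ whose identity component equals $M$, and then to show that $N_P(M)$ is precisely this $L$. First I would record that $R_u(P) = R_u(P^0)$: since $R_u(P)$ is connected it lies in $P^0$ and is normal there, so $R_u(P) \subseteq R_u(P^0)$; conversely $R_u(P^0)$ is characteristic in $P^0$, which is normal in $P$, so $R_u(P^0)$ is a connected normal unipotent subgroup of $P$ and hence $R_u(P^0) \subseteq R_u(P)$. Writing $P = P_\lambda$, the subgroup $L_\lambda^0 = L_\lambda(G^0)$ is a Levi subgroup of the parabolic $P^0 = P_\lambda(G^0)$ of the connected group $G^0$. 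As all Levi subgroups of $P^0$ are conjugate under $R_u(P^0) = R_u(P)$, there is $u \in R_u(P)$ with $M = u L_\lambda^0 u^{-1}$. I then set $L := u L_\lambda u^{-1}$, which is an R-Levi subgroup of $P$ (being an $R_u(P)$-conjugate of $L_\lambda$), and by construction $L^0 = u L_\lambda^0 u^{-1} = M$. It therefore suffices to prove $N_P(M) = L$.

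The inclusion $L \subseteq N_P(M)$ is immediate, since $L$ normalizes its own identity component $L^0 = M$. For the reverse inclusion I would use the Levi decomposition $P = R_u(P) \rtimes L$, obtained by conjugating $P_\lambda = R_u(P_\lambda) \rtimes L_\lambda$ by $u$. Given $g \in N_P(M)$, write $g = v\ell$ with $v \in R_u(P)$ and $\ell \in L$. Since $\ell \in L \subseteq N_P(M)$, the factor $v = g\ell^{-1}$ also normalizes $M$, so the reverse inclusion reduces to the claim that $N_{R_u(P)}(M) = \{1\}$.

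The crux, and the step I expect to be the main obstacle, is establishing $N_{R_u(P)}(M) = \{1\}$; here I work inside the connected group $P^0 = U \rtimes M$, where $U := R_u(P) = R_u(P^0)$. The key computation shows that any $v \in U$ normalizing $M$ must in fact centralize $M$: for each $m \in M$ the element $vmv^{-1}m^{-1}$ lies in $M$, because $v$ normalizes $M$ and $m \in M$, and it also lies in $U$, since writing it as $v\cdot(mv^{-1}m^{-1})$ exhibits it as a product of two elements of the normal subgroup $U$; hence $vmv^{-1}m^{-1} \in U \cap M = \{1\}$. Thus $N_U(M) = C_U(M)$. Finally, using that $M$ is a Levi subgroup and so $M = C_{G^0}(Z(M)^0)$, any $v \in C_U(M)$ centralizes $Z(M)^0$ and hence lies in $C_{G^0}(Z(M)^0) = M$; together with $v \in U$ and $U \cap M = \{1\}$ this forces $v = 1$. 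This yields $N_{R_u(P)}(M) = \{1\}$, which completes the proof that $N_P(M) = L$ is an R-Levi subgroup of $P$.
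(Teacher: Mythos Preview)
Your proof is correct and follows essentially the same route as the paper's: both produce an R-Levi subgroup $L$ of $P$ with $L^0 = M$, observe $L \subseteq N_P(M)$, and then use the Levi decomposition $P = R_u(P) \rtimes L$ to reduce to $N_{R_u(P)}(M) = \{1\}$. The only differences are cosmetic: the paper chooses $\lambda$ at the outset so that $L_\lambda^0 = M$ (avoiding your conjugation by $u$), and it obtains $N_{R_u(P)}(M) = \{1\}$ in one line from the simple transitivity of the $R_u(P^0)$-action on the Levi subgroups of $P^0$, whereas you re-derive this fact via the commutator argument and the identity $M = C_{G^0}(Z(M)^0)$.
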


\begin{proof}
We may choose $\lambda \in Y(G)$ such that
$P = P_\lambda$, $P^0 = P_\lambda(G^0)$ and $M = L_\lambda(G^0) = 
L_\lambda^0$.
We have the Levi decomposition
$P = R_u(P_\lambda) \rtimes L_\lambda
= R_u(P_\lambda^0) \rtimes L_\lambda$.
Since $L_\lambda \subseteq N_P(L_\lambda^0)$ and
$R_u(P_\lambda^0) \cap N_P(L_\lambda^0) =1$
(as $R_u(P_\lambda^0)$ acts simply transitively on the
set of Levi subgroups of $P_\lambda^0$),
we conclude that $N_P(M) = N_P(L_\lambda^0) = L_\lambda$.
\end{proof}

\begin{defn}
\label{defn:gcr}
Suppose $H$ is a subgroup of $G$.  We say $H$ is \emph{$G$-completely
reducible} ($G$-cr for short) if whenever $H$ is contained in an
R-parabolic subgroup $P$ of $G$, then there exists an R-Levi subgroup
$L$ of $P$ with $H \subseteq L$.
\end{defn}

Since all parabolic subgroups (respectively\ all Levi subgroups of
parabolic subgroups) of a connected reductive group are R-parabolic
subgroups (respectively\ R-Levi subgroups of R-parabolic subgroups),
Definition \ref{defn:gcr} coincides with Serre's original definition
for connected groups \cite{serre2}.

Let $H$ be a subgroup of $G$ and let $G \hookrightarrow \GL_m$
be an embedding of algebraic groups. Let $\tuple{h} \in H^n$ be a
tuple of generators of the associative subalgebra of $\Mat_m$
spanned by $H$ (such a tuple exists for $n$ sufficiently large). Then
$\tuple{h}$ is called a \emph{generic tuple} of $H$, see
\cite[Def.\ 5.4]{GIT}.
We recall the following geometric criterion for $G$-complete reducibility
\cite[Thm.\ 5.8]{GIT}; it provides a link between the theory of $G$-complete reducibility
and the geometric invariant theory of reductive groups. 

\begin{thm} \label{thm:geometric}
Let $H$ be a subgroup of $G$ and let
$\tuple{h}\in H^n$ be a generic tuple for $H$. Then $H$ is
$G$-completely reducible if and only if the orbit $G \cdot \tuple{h}$
under simultaneous conjugation is closed in $G^n$.
In particular, if $H = \langle h \rangle$ is a cyclic subgroup of $G$,
then $H$ is $G$-completely reducible if and only if the conjugacy class
$G\cdot h \subseteq G$ is closed.
\end{thm}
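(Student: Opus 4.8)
The plan is to deduce this from the Hilbert--Mumford--Kempf theory of the $G$-action on $G^n$ by simultaneous conjugation, via a dictionary that translates the existence of destabilizing cocharacters into containment of $H$ in R-parabolic and R-Levi subgroups. Fix the embedding $G\hookrightarrow\GL_m\subseteq\Mat_m$ and let $A\subseteq\Mat_m$ be the associative subalgebra spanned by $H$, so that $\tuple{h}$ generates $A$ and $H\subseteq A$. The starting observation is that a cocharacter $\lambda\in Y(G)$ induces, via conjugation, a grading $\Mat_m=\bigoplus_i\Mat_m(i)$ into $\Ad\lambda$-weight spaces, and that for $x\in\Mat_m$ the limit $\lim_{a\to0}\lambda(a)x\lambda(a)\inverse$ exists precisely when $x$ lies in the non-negatively graded subalgebra $\Mat_m(\geq 0):=\bigoplus_{i\geq0}\Mat_m(i)$, in which case the limit is the projection of $x$ onto $\Mat_m(0)=C_{\Mat_m}(\lambda(k^*))$.

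First I would set up the dictionary, and this is where genericity does the work. Since $\Mat_m(\geq 0)$ is a subalgebra and $\tuple{h}$ generates $A$, the entries of $\tuple{h}$ all lie in $\Mat_m(\geq 0)$ if and only if $A\subseteq\Mat_m(\geq 0)$, equivalently $H\subseteq P_\lambda$; thus $\lim_{a\to0}\lambda(a)\cdot\tuple{h}$ exists if and only if $H\subseteq P_\lambda$. When the limit exists, its entries are $c_\lambda(h_i)$, where $c_\lambda\colon P_\lambda\lra L_\lambda$ is the Levi projection with kernel $R_u(P_\lambda)$, so the limit tuple $\tuple{h}'$ generates $c_\lambda(A)\subseteq\Mat_m(0)$ and the subgroup $c_\lambda(H)\subseteq L_\lambda$. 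I would then prove the key equivalence: $\tuple{h}'\in R_u(P_\lambda)\cdot\tuple{h}$ if and only if $H$ is contained in an R-Levi subgroup of $P_\lambda$. For one direction, if $H\subseteq L=u\inverse L_\lambda u$ with $u\in R_u(P_\lambda)$, then $c_\lambda(uh_iu\inverse)=c_\lambda(h_i)$ because $u\in\ker c_\lambda$, while $uh_iu\inverse\in L_\lambda$ gives $c_\lambda(uh_iu\inverse)=uh_iu\inverse$; hence $\tuple{h}'=u\cdot\tuple{h}$. Conversely, if $\tuple{h}'=u\cdot\tuple{h}$, then the $u\inverse h_i'u$ generate $A$ and lie in the subalgebra $u\inverse\Mat_m(0)u$, forcing $A\subseteq u\inverse\Mat_m(0)u$; hence every $g\in H\subseteq A$ satisfies $ugu\inverse\in\Mat_m(0)\cap G=L_\lambda$, i.e. $H\subseteq u\inverse L_\lambda u$. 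Again genericity (that $\tuple{h}$ generates $A\supseteq H$) is exactly what lets one pass from the tuple-level statement back to a statement about the group $H$.

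With the dictionary in hand, the forward implication is the easy one. If $H$ is $G$-cr but $G\cdot\tuple{h}$ is not closed, then by the Hilbert--Mumford--Kempf criterion there is $\lambda\in Y(G)$ with $\tuple{h}'=\lim_{a\to0}\lambda(a)\cdot\tuple{h}$ existing and $\tuple{h}'\notin G\cdot\tuple{h}$. Existence gives $H\subseteq P_\lambda$, so by $G$-complete reducibility $H$ lies in some R-Levi of $P_\lambda$; the first half of the key equivalence then yields $\tuple{h}'\in R_u(P_\lambda)\cdot\tuple{h}\subseteq G\cdot\tuple{h}$, a contradiction. Hence $G\cdot\tuple{h}$ is closed.

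The reverse implication is where the main obstacle lies. Assume $G\cdot\tuple{h}$ is closed and $H\subseteq P=P_\lambda$. Then $\tuple{h}'$ exists and lies in $\overline{G\cdot\tuple{h}}=G\cdot\tuple{h}$, so $\tuple{h}'\in G\cdot\tuple{h}$. To apply the second half of the key equivalence I must upgrade this to $\tuple{h}'\in R_u(P_\lambda)\cdot\tuple{h}$, and this is the crux: one needs the geometric fact that whenever $\lim_{a\to0}\lambda(a)\cdot v$ exists and lands in the orbit $G\cdot v$, it already lands in $R_u(P_\lambda)\cdot v$. I expect this step to require the full strength of Kempf's optimality theory (or an equivalent result on cocharacters realizing the passage to the limit inside the orbit), and to be the delicate point in positive characteristic, where orbit maps need not be separable. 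Granting it, the second half of the equivalence gives that $H$ lies in an R-Levi of $P$, so $H$ is $G$-cr. Finally, the cyclic case is immediate: for $H=\langle h\rangle$ the single element $h$ already generates the associative algebra spanned by $H$ (since $h\inverse$ is a polynomial in $h$), so $(h)$ is a generic tuple and $G\cdot(h)=G\cdot h$ is the conjugacy class.
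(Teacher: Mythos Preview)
The paper does not prove this theorem: it is quoted verbatim as \cite[Thm.\ 5.8]{GIT} and used as a black box.  So there is no ``paper's own proof'' to compare against, and your outline is essentially the argument one finds in \cite{GIT}; the dictionary you set up---that $\lim_{a\to 0}\lambda(a)\cdot\tuple{h}$ exists iff $H\subseteq P_\lambda$, and that the limit lies in $R_u(P_\lambda)\cdot\tuple{h}$ iff $H$ lies in an R-Levi of $P_\lambda$---is exactly right, and your use of genericity (that $\tuple{h}$ generates the associative envelope $A\supseteq H$) is precisely what makes the translation between tuples and subgroups go through in both directions.  The cyclic case is handled correctly.

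You have also put your finger on the one genuine difficulty: upgrading $\tuple{h}'\in G\cdot\tuple{h}$ to $\tuple{h}'\in R_u(P_\lambda)\cdot\tuple{h}$.  Two remarks may help you close it rather than merely grant it.  First, once you know $\tuple{h}'\in P_\lambda\cdot\tuple{h}$, the rest is a short calculation: writing $\tuple{h}'=p\cdot\tuple{h}$ with $p\in P_\lambda$ and applying your equivariance $c_\lambda(p\cdot\tuple{h})=c_\lambda(p)\cdot c_\lambda(\tuple{h})$ forces $l:=c_\lambda(p)$ to fix $\tuple{h}'$; then $p=lu$ with $u\in R_u(P_\lambda)$ gives $\tuple{h}'=l\cdot(u\cdot\tuple{h})$, hence $\tuple{h}'=u\cdot\tuple{h}$.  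Second, the honest content is therefore reduced to showing $\tuple{h}'\in P_\lambda\cdot\tuple{h}$ given $\tuple{h}'\in G\cdot\tuple{h}$.  This does not need the full Kempf optimality machinery; one route (used in \cite{GIT}, going back to Richardson) is to observe that orbits of the unipotent group $R_u(P_\lambda)$ on the affine variety $G^n$ are closed, so it suffices to show $\tuple{h}'\in\overline{R_u(P_\lambda)\cdot\tuple{h}}$, and then argue via the $P_\lambda$-equivariant retraction $c_\lambda$ together with a dimension comparison of stabilizers (using $\dim G_{\tuple{h}}=\dim G_{\tuple{h}'}$ since the orbits coincide, and $\lambda(k^*)\subseteq G_{\tuple{h}'}$).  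This step works uniformly in all characteristics and does not require separability of the orbit map; your worry about positive characteristic is reasonable but ultimately unnecessary here.
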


The following result
has been proved with methods from geometric invariant theory
(see \cite[Def.\ 5.17]{GIT}):

\begin{thm} \label{thm:optimal}
Assume that the subgroup $H$ of $G$ is not $G$-completely reducible.
Then there exists an R-parabolic subgroup $P$ of $G$ with the following
two properties:
\begin{itemize}
\item[(i)] $H$ is not contained in any R-Levi subgroup of $P$,
\item[(ii)] $N_G(H) \subseteq P$.
\end{itemize}
\end{thm}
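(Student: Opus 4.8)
The plan is to deduce the statement from the Kempf--Rousseau theory of optimal destabilizing cocharacters, in the form developed for (possibly non-connected) reductive $G$ in \cite{GIT}. Fix an embedding $G\hookrightarrow\GL_m$ and a generic tuple $\tuple{h}\in H^n$ for $H$, and regard $\tuple{h}$ as a point of the affine $G$-variety $G^n$ under simultaneous conjugation. Since $H$ is not $G$-cr, Theorem \ref{thm:geometric} tells us that the orbit $G\cdot\tuple{h}$ is not closed in $G^n$. Equip $Y(G)$ with a $G$-invariant norm. The first step is to invoke the optimality results of \cite{GIT}: because $\tuple{h}$ is unstable, there is a canonical nonempty optimal class $\Lambda$ of cocharacters driving $\tuple{h}$ to the unique closed orbit in $\overline{G\cdot\tuple{h}}$, all members of $\Lambda$ define one and the same R-parabolic subgroup $P=P_\lambda$, and $\Lambda$ is a single orbit under $R_u(P)$. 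This $P$ is the candidate R-parabolic in the statement.

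Next I would verify (i). By construction any $\lambda\in\Lambda$ properly destabilizes $\tuple{h}$, so $\tuple{h}_0:=\lim_{a\to0}\lambda(a)\cdot\tuple{h}$ exists and lies in a strictly smaller orbit, hence is not $G$-conjugate to $\tuple{h}$. Suppose, for contradiction, that $H$ were contained in some R-Levi subgroup $L$ of $P$. The R-Levi subgroups of $P$ are exactly the $R_u(P)$-conjugates of $L_\lambda$, so after adjusting $\lambda$ within $\Lambda$ we may assume $H\subseteq L_\lambda=C_G(\lambda(k^*))$. Then $\lambda(a)$ centralizes each coordinate of $\tuple{h}$, so $\lim_{a\to0}\lambda(a)\cdot\tuple{h}=\tuple{h}$, contradicting that the limit lies in a strictly smaller orbit. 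Hence $H$ lies in no R-Levi subgroup of $P$.

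The main point is (ii), and the key is the $G$-equivariance and canonicity of the Kempf construction. For $g\in G$ one has $\Lambda(g\cdot\tuple{h})=g\cdot\Lambda(\tuple{h})$ and therefore $P(g\cdot\tuple{h})=gPg\inverse$, because the norm is $G$-invariant. Now if $g\in N_G(H)$ then $\Int_g$ preserves $H$, hence preserves the associative algebra spanned by $H$, so $g\cdot\tuple{h}$ is again a generic tuple for $H$. The crucial claim is that the optimal R-parabolic depends only on $H$ and not on the chosen generic tuple; granting this, $P(g\cdot\tuple{h})=P(\tuple{h})=P$, whence $gPg\inverse=P$. Thus $N_G(H)\subseteq N_G(P)=P$, using that R-parabolic subgroups are self-normalizing \cite{BMR}, which is precisely (ii).

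It remains to justify the crucial claim, and this is where I expect the real work to lie. The conditions cutting out the destabilizing cocharacters of $\tuple{h}$ are intrinsic to $H$: ``$\lim_{a\to0}\lambda(a)\cdot\tuple{h}$ exists'' is equivalent to $H\subseteq P_\lambda$, and ``the limit is $G$-conjugate to $\tuple{h}$'' is equivalent to $H$ lying in an R-Levi of $P_\lambda$; neither refers to the particular tuple. The subtlety is that the numerical instability measure singling out the optimal class is computed from the weights of the coordinates of $\tuple{h}$, which a priori could vary with the choice of generators. The way I would resolve this is to characterize $P$ intrinsically via the optimality property that distinguishes it among the R-parabolics $P_\mu$ destabilizing $H$; since such a characterization refers only to $H$, the set of destabilizing R-parabolics for $H$ --- and hence its distinguished element $P$ --- is stable under $N_G(H)$, giving $gPg\inverse=P$ directly. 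Establishing this intrinsic characterization (equivalently, proving existence and uniqueness of the optimal class for the subgroup $H$ itself, and extending the results of Kempf and Rousseau to the non-connected R-parabolic setting) is the technical heart of the argument, and is exactly the content supplied by \cite[Def.\ 5.17]{GIT} and the surrounding results.
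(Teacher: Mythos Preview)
Your proposal is correct and follows essentially the same route as the paper. The paper does not give a self-contained proof here but simply cites \cite[Def.\ 5.17]{GIT} and sketches the existence of an optimal destabilizing cocharacter class $\Omega$; your argument is a more detailed expansion of exactly this Kempf--Rousseau construction, and you likewise defer the technical heart (that the optimal R-parabolic depends only on $H$, not on the chosen generic tuple) to the results of \cite{GIT}.
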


The geometric construction of $P$ in \cite[\S 4]{GIT}
is roughly as follows:
there is a class of so-called
\emph{optimal destabilizing cocharacters} $\Omega \subseteq Y(G)$
such that if $\lambda \in \Omega$ then $P:= P_\lambda$ has properties
(i) and (ii) as in Theorem \ref{thm:optimal}.
We call such an R-parabolic subgroup $P$ of $G$ 
an \emph{optimal destabilizing
R-parabolic subgroup for $H$}. 

\subsection{Criteria for \texorpdfstring{$G$}{G}-complete reducibility}

In this subsection we study criteria for $G$-complete reducibility
in terms of some smaller group.

We first recall the following results about normal subgroups
(cf.\ \cite[Thm.\ 3.4, Cor.\ 3.7(ii)]{BMR2}).

\begin{thm} \label{thm:normal}
Suppose that $N \subseteq H$ are subgroups of $G$ and 
$N$ is normal in $H$.
\begin{itemize}
\item[(i)] If $N$ is also normal in $G$, then $H$ is 
$G$-completely reducible if and only if
$H/N$ is $G/N$-completely reducible.
\item[(ii)] If $H/N$ is linearly reductive, then $H$
is $G$-completely reducible if and only if $N$ is 
$G$-completely reducible.
\end{itemize}
\end{thm}

A homomorphism $\pi: G_1 \rightarrow G_2$ is called \emph{non-degenerate}
provided that $\ker(\pi)^0$ is a torus. 
The next result is contained in \cite[Lem. 2.12 and \S6]{BMR}:

\begin{lem} \label{lem:non-degenerate}
Let $\pi:G_1 \rightarrow G_2$ be a non-degenerate epimorphism
of reductive groups.
Let $H \subseteq G_1$ be a subgroup.
Then $H$ is $G_1$-completely reducible if and only if $\pi(H)$ is 
$G_2$-completely reducible.
\end{lem}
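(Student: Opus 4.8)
The plan is to transfer R-parabolic and R-Levi subgroups back and forth across $\pi$ by means of cocharacters, so that a destabilizing R-parabolic subgroup for $H$ in $G_1$ corresponds to one for $\pi(H)$ in $G_2$ and vice versa. Write $N=\ker\pi$, so that $G_2=G_1/N$ with $N^0$ a torus. The first ingredient I would establish is \emph{image compatibility}: for every $\lambda\in Y(G_1)$ one has $\pi(P_\lambda)=P_{\pi\circ\lambda}$, $\pi(L_\lambda)=L_{\pi\circ\lambda}$ and $\pi(R_u(P_\lambda))=R_u(P_{\pi\circ\lambda})$. This follows from the limit description of $P_\lambda$ and $L_\lambda$ together with the surjectivity of $\pi$ and the fact that $\pi$ carries unipotent elements to unipotent elements. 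The second ingredient is a \emph{cocharacter lifting} statement: given $\mu\in Y(G_2)$ there are an integer $n\geq 1$ and $\lambda\in Y(G_1)$ with $\pi\circ\lambda=n\mu$; since $P_{n\mu}=P_\mu$ and $L_{n\mu}=L_\mu$, this shows that every R-parabolic (resp. R-Levi) subgroup of $G_2$ is the $\pi$-image of an R-parabolic (resp. R-Levi) subgroup of $G_1$. This step uses only that $\pi$ restricts to a surjection of maximal tori whose kernel has connected component a torus, so that the induced map on cocharacter lattices is surjective after inverting a single integer.

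The third, and most delicate, ingredient is \emph{fibre control}: I would show that $N\subseteq L_\lambda$ for every $\lambda\in Y(G_1)$, whence $\pi^{-1}(P_{\pi\circ\lambda})=P_\lambda$ and $\pi^{-1}(L_{\pi\circ\lambda})=L_\lambda$, and more generally $\pi^{-1}(L')$ is an R-Levi subgroup of $P_\lambda$ for every R-Levi subgroup $L'$ of $P_{\pi\circ\lambda}$. The non-degeneracy hypothesis enters precisely here. Since any cocharacter takes values in $G_1^0$, and $N^0$ is a normal torus hence central in $G_1^0$, we get $N^0\subseteq C_{G_1}(\lambda(k^*))=L_\lambda$ with no effort; combined with image compatibility, a lift of an $R_u(P_{\pi\circ\lambda})$-conjugate of $L_{\pi\circ\lambda}$ to an $R_u(P_\lambda)$-conjugate of $L_\lambda$ then matches up with $\pi$.

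Granting these ingredients, both implications are formal. For the forward direction, suppose $H$ is $G_1$-cr and let $P_\mu\supseteq\pi(H)$ be an R-parabolic subgroup of $G_2$. Lift $\mu$ (up to a positive multiple) to $\lambda\in Y(G_1)$; then $H\subseteq\pi^{-1}(\pi(H))\subseteq\pi^{-1}(P_\mu)=P_\lambda$, so there is an R-Levi subgroup $L$ of $P_\lambda$ with $H\subseteq L$, and $\pi(L)$ is an R-Levi subgroup of $P_\mu$ containing $\pi(H)$. The reverse direction is symmetric: given an R-parabolic subgroup $P_\lambda\supseteq H$ of $G_1$, one has $\pi(H)\subseteq P_{\pi\circ\lambda}$; an R-Levi subgroup $L'$ of $P_{\pi\circ\lambda}$ contains $\pi(H)$ by hypothesis, and $\pi^{-1}(L')$ is then an R-Levi subgroup of $P_\lambda$ containing $H$.

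The hard part will be the fibre-control step, and specifically verifying that the whole of $N$ --- including its possibly nontrivial intersection with the non-identity components of $G_1$, which need not a priori centralize $\lambda(k^*)$ --- lies in $L_\lambda$ and is respected by the correspondence of R-Levi subgroups. This is exactly the point at which the non-connected formalism of R-parabolic subgroups from \cite[\S6]{BMR} is needed; alternatively, one can bring Theorem \ref{thm:normal}(i) to bear on the normal subgroup $N\subseteq HN$ of $G_1$, reducing the comparison of $\pi(H)=HN/N$ with $HN$ to the connected-kernel case and isolating the remaining passage between $H$ and $HN$.
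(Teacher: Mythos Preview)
The paper does not supply its own proof; it simply records that the result is contained in \cite[Lem.~2.12 and \S6]{BMR}. Your cocharacter-transfer strategy is the right one and is essentially what one finds there, but your fibre-control claim $N\subseteq L_\lambda$ for every $\lambda\in Y(G_1)$ is false as stated. Take $G_1=\langle\tau\rangle\ltimes(k^*)^2$ with $\tau$ swapping the two factors, $N=G_1$, $G_2=\{1\}$, and $\lambda(a)=(a,1)$; then $\tau\notin C_{G_1}(\lambda(k^*))=L_\lambda$. What \emph{is} true---and what your own computation with $N^0$ central almost proves---is the weaker identity $N\cap P_\lambda=N\cap L_\lambda$: for $n\in N$ the map $a\mapsto\lambda(a)n\lambda(a)^{-1}n^{-1}$ is a genuine cocharacter into the closed torus $N^0$, and such a cocharacter has a limit in $G_1$ only when it is trivial. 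This is already enough to repair your reverse direction: given $H\subseteq P_\lambda$ and $\pi(H)\subseteq L'$, one has $H\subseteq P_\lambda\cap\pi^{-1}(L')$, and using $N\cap P_\lambda=N\cap L_\lambda$ one checks that this intersection is exactly an $R_u(P_\lambda)$-conjugate of $L_\lambda$.

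The forward direction, however, does not go through as you wrote it: with $\lambda$ a lift of $\mu$ you obtain only $H\subseteq NP_\lambda=\pi^{-1}(P_\mu)$, not $H\subseteq P_\lambda$, and $NP_\lambda$ need not equal $P_\lambda$. The clean fix is to pass to $Q:=N_{G_1}(P_\lambda^0)$, which is an R-parabolic subgroup of $G_1$ by \cite[Prop.~6.1]{BMR}. Every $n\in N$ normalizes $P_\lambda^0$ (conjugation by $n$ induces the identity on $G_2^0$, and $N\cap G_1^0$ is central in $G_1^0$, so parabolic subgroups of $G_1^0$ and $G_2^0$ are in bijection), hence $H\subseteq NP_\lambda\subseteq Q$. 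Now $G_1$-complete reducibility of $H$ yields an R-Levi $L$ of $Q$ containing $H$; then $\pi(H)\subseteq P_\mu$ normalizes the Levi subgroup $\pi(L^0)$ of $P_\mu^0$, and Lemma~\ref{lem:normalizer} places $\pi(H)$ in an R-Levi of $P_\mu$. Your alternative route through Theorem~\ref{thm:normal}(i) is also viable in principle, but note that the residual step ``$H$ is $G_1$-cr if and only if $HN$ is $G_1$-cr'' is not a formal consequence of Theorem~\ref{thm:normal}(ii), since $H$ need not be normal in $HN$; it requires essentially the same analysis as above.
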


As an immediate consequence, we obtain the following result which 
allows us to focus on the part of $G$ that is effectively acting on $G^0$.
Note that $C_G(G^0)^0\subseteq Z(G^0)^0$ is a torus.

\begin{cor} \label{cor:quotient}
Let $H \subseteq G$ be a subgroup. 
Let $\pi: G \rightarrow G/C_G(G^0)$ be the natural projection.
Then $H$ is $G$-completely reducible if and only if $\pi(H)$ is 
$\pi(G)$-completely reducible.
\end{cor}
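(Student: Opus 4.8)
The plan is to deduce this directly from Lemma \ref{lem:non-degenerate} applied to the natural projection $\pi\colon G \to G/C_G(G^0)$, so the real content is just to check that $\pi$ satisfies the hypotheses of that lemma: it must be a non-degenerate epimorphism of reductive groups. Once those hypotheses are in place, the lemma gives the equivalence verbatim.

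First I would dispatch the easy points. Since $C_G(G^0)$ is a closed normal subgroup of $G$, the map $\pi$ is an epimorphism with $\pi(G) = G/C_G(G^0)$. To see the target is reductive, I would invoke the standard fact that a quotient of a reductive group by a closed normal subgroup is again reductive: the preimage in $G$ of $R_u(G/C_G(G^0))$ would yield a connected normal unipotent subgroup of $G$, which must be trivial as $R_u(G) = \{1\}$.

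The substantive step is non-degeneracy, namely that $\ker(\pi)^0 = C_G(G^0)^0$ is a torus, and here I would justify the inclusion $C_G(G^0)^0 \subseteq Z(G^0)^0$ flagged before the statement. The argument is that $C_G(G^0)^0 \cdot G^0$ is a connected subgroup of $G$ containing $1$, hence is contained in the identity component $G^0$; therefore $C_G(G^0)^0 \subseteq G^0$, and since it centralizes $G^0$ it lies in $C_{G^0}(G^0) = Z(G^0)$, and by connectedness in $Z(G^0)^0$. As $G^0$ is connected reductive, $Z(G^0)^0$ is a torus, so its closed connected subgroup $C_G(G^0)^0$ is a torus as well; this is precisely the non-degeneracy of $\pi$.

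With all hypotheses verified, Lemma \ref{lem:non-degenerate} applies and yields that $H$ is $G$-completely reducible if and only if $\pi(H)$ is $\pi(G)$-completely reducible, which is the assertion. I do not anticipate any genuine obstacle; the one point requiring care is the inclusion $C_G(G^0)^0 \subseteq Z(G^0)^0$, since one must rule out the centralizer of $G^0$ having an identity component lying outside $G^0$, and this is exactly what the connectedness argument above handles.
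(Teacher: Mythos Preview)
Your proposal is correct and matches the paper's approach exactly: the paper simply notes that $C_G(G^0)^0 \subseteq Z(G^0)^0$ is a torus and deduces the corollary as an immediate consequence of Lemma~\ref{lem:non-degenerate}. One minor slip: your one-line justification that $G/C_G(G^0)$ is reductive is not quite right as written---the preimage of $R_u(G/C_G(G^0))$ contains $C_G(G^0)$ and so is neither connected nor unipotent---but the standard fact you invoke is true (the closed normal subgroup $C_G(G^0)$ has reductive identity component, being a torus, so the quotient is reductive), and this does not affect the argument.
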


The following lemma gives two necessary conditions for a subgroup of $G$ to
be $G$-completely reducible, both of which can be checked in the connected group
$G^0$.

\begin{lem} \label{lem:2_nec_cond}
Let $H$ be a $G$-completely reducible subgroup of $G$. Then the following hold:
\begin{itemize}
\item[(i)] $H \cap G^0$ is $G^0$-completely reducible;
\item[(ii)] $C_{G^0}(H)$ is $G^0$-completely reducible.
\end{itemize}
\end{lem}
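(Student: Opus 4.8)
The plan is to reduce both parts to the transfer results for normal subgroups, combined with a comparison between $G$-complete reducibility and $G^0$-complete reducibility for subgroups that already lie in $G^0$. I would first isolate that comparison as the common engine: \emph{if $K \subseteq G^0$ is $G$-completely reducible, then $K$ is $G^0$-completely reducible.} To see this, let $Q$ be any R-parabolic subgroup of $G^0$ containing $K$. Since $G^0$ is connected, $Q = P_\lambda(G^0) = P_\lambda^0$ for some $\lambda \in Y(G^0) \subseteq Y(G)$, so $K \subseteq P_\lambda^0 \subseteq P_\lambda$. As $K$ is $G$-cr, there is an R-Levi subgroup $L$ of $P_\lambda$ with $K \subseteq L$; and as recorded in the Remark preceding Lemma \ref{lem:normalizer}, the R-Levi subgroups of $P_\lambda(G^0)$ are exactly the intersections $L\cap G^0$ with $L$ an R-Levi subgroup of $P_\lambda$. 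Since $K \subseteq G^0$ we get $K \subseteq L \cap G^0$, so $K$ lies in an R-Levi subgroup of $Q$. As $Q$ was arbitrary, $K$ is $G^0$-cr. This step is purely formal: it uses only the identifications $P_\lambda^0 = P_\lambda(G^0)$ and the shape of the R-Levi subgroups of $P_\lambda(G^0)$.

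For part (i), I would note that $H \cap G^0$ is normal in $H$ because $G^0$ is normal in $G$. Since $H$ is $G$-cr, the transfer result for normal subgroups (\cite[Thm.~3.10]{BMR}, quoted in the introduction) shows that $H \cap G^0$ is $G$-cr, and as $H \cap G^0 \subseteq G^0$ the comparison above immediately upgrades this to $G^0$-complete reducibility. For part (ii) I would run the same two-step scheme on the centralizer, writing $C_{G^0}(H) = C_G(H) \cap G^0$, which is normal in $C_G(H)$ (again because $G^0$ is normal in $G$). Granting that $C_G(H)$ is itself $G$-completely reducible, \cite[Thm.~3.10]{BMR} shows $C_{G^0}(H)$ is $G$-cr, and since $C_{G^0}(H) \subseteq G^0$ the comparison again yields $G^0$-complete reducibility.

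The hard part is exactly this last input: that the centralizer of a $G$-cr subgroup is $G$-cr. I expect to invoke it as a cited theorem, established by geometric-invariant-theory methods in \cite{BMR}, \cite{BMR2}, rather than to reprove it. It is worth stressing why one cannot instead get away with the soft optimal-destabilizer argument of Theorem \ref{thm:optimal}. Since $H$ normalizes $C_{G^0}(H)$, equivariance of the optimal destabilizing class $\Omega$ does force $H$ into the optimal destabilizing R-parabolic $P_\lambda$ for $C_{G^0}(H)$, and then, as $H$ is $G$-cr, into one of its R-Levi subgroups $L_\mu$ with $\mu \in \Omega$. However, $H \subseteq L_\mu = C_G(\mu(k^*))$ only yields $\mu(k^*) \subseteq C_{G^0}(H)$, which is entirely consistent with $\mu$ destabilizing $C_{G^0}(H)$ and produces no contradiction: a destabilizing $\mu$ for $C_{G^0}(H)$ precisely does not centralize $C_{G^0}(H)$, so one gets no containment of $C_{G^0}(H)$ in an R-Levi subgroup of $P_\lambda^0$. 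Thus the complete reducibility of the centralizer is a genuinely deeper fact that must be supplied externally, and identifying the correct citation for it in the possibly non-connected setting is the real obstacle in assembling this proof.
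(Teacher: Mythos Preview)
Your proof is correct and follows essentially the same route as the paper's. The paper's argument is just more compressed: for (i) it cites \cite[Lem.~6.10(ii)]{BMR} directly (which packages together your normal-subgroup step via \cite[Thm.~3.10]{BMR} and your ``$G$-cr in $G^0$ implies $G^0$-cr'' comparison), and for (ii) it cites \cite[Cor.~3.17 and \S6]{BMR} for the fact that $C_G(H)$ is $G$-cr and then simply applies part (i) to $C_G(H)$---exactly the two-step scheme you describe, with the same external input you correctly flag as the substantive ingredient.
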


\begin{proof}
(i). This is the content of \cite[Lem.\ 6.10 (ii)]{BMR}.
(ii). Since $H$ is $G$-cr, so is its centralizer $C_G(H)$, by
\cite[Cor.\ 3.17 and \S 6]{BMR}. Now the result follows from part (i).
\end{proof}

Under the assumption that assertion (i) or (ii) of Lemma
\ref{lem:2_nec_cond} holds, the next two lemmas allow us
to replace the ambient group $G$ with a potentially smaller subgroup
$M$.

\begin{lem} \label{lem:first_reduction}
Let $H$ be a subgroup of $G$ and suppose that $H \cap G^0$ is 
$G^0$-completely reducible. Then $M = H C_{G^0}(H \cap G^0)$ is
reductive. Moreover, $H$ is $G$-completely reducible if and only if it is
$M$-completely reducible.
\end{lem}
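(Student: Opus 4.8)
The plan is to prove reductivity of $M$ first, and then to establish the two implications of the equivalence separately. Throughout I would write $N=H\cap G^0$ and $C=C_{G^0}(N)$, so that $M=HC$ and $M\cap G^0=NC$. For reductivity, note that $C$ is $G^0$-completely reducible, being the centralizer in the connected reductive group $G^0$ of the $G^0$-cr subgroup $N$ (by \cite[Cor.~3.17]{BMR}); hence both $N$ and $C$ are reductive. Since $C=C_{G^0}(N)$ commutes elementwise with $N$, the product $NC$ is the image of $N\times C$ under the multiplication homomorphism, and so $NC$ is reductive. As $M\cap G^0=NC$ has finite index in $M$, we get $M^0\subseteq NC$; thus $R_u(M)$, being connected, unipotent and normalized by $NC$, lies in $R_u(NC)=1$, and $M$ is reductive.

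The equivalence rests on one observation. If $\lambda\in Y(G)$ satisfies $N\subseteq L_\lambda^0$ and $u\in R_u(P_\lambda)$ satisfies $u^{-1}Nu\subseteq L_\lambda^0$, then applying the projection $c_\lambda\colon P_\lambda\to L_\lambda$, $g\mapsto\lim_{a\to0}\lambda(a)g\lambda(a)^{-1}$ (a homomorphism killing $R_u(P_\lambda)$ and fixing $L_\lambda$ pointwise) gives $u^{-1}nu=c_\lambda(u^{-1}nu)=c_\lambda(n)=n$ for every $n\in N$; that is, $u\in C$. In words: once $N$ already sits in the standard R-Levi, any conjugator pushing $H$ into an R-Levi automatically lands in $C\subseteq M$. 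For the direction \emph{$M$-cr implies $G$-cr}, take $\sigma\in Y(G)$ with $H\subseteq P_\sigma$. Using that $N$ is $G^0$-cr, I would conjugate $\sigma$ by an element $v\in R_u(P_\sigma^0)$ to arrange $N\subseteq L_\sigma^0$ without changing $P_\sigma$. But then $\sigma(k^*)\subseteq C_{G^0}(N)=C\subseteq M$, so $\sigma\in Y(M)$ and $H\subseteq P_\sigma(M)$. Now $M$-complete reducibility of $H$ places $H$ in an R-Levi of $P_\sigma(M)$, which has the form $L\cap M$ for an R-Levi $L$ of $P_\sigma$ (see the Remark in Section~\ref{subsec:noncon}); hence $H\subseteq L$, as required.

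For the direction \emph{$G$-cr implies $M$-cr}, take $\mu\in Y(M)$ with $H\subseteq P_\mu(M)$. The difficulty is that complete reducibility in $G$ only yields a conjugator $u\in R_u(P_\mu)$ with $u^{-1}Hu\subseteq L_\mu$, and a priori $u\notin M$. To apply the observation above I must first move $N$ into $L_\mu^0$ using an element of $M$. For this I would use that $N$ is normal in the reductive group $NC=M\cap G^0$ (normal because $C$ centralizes $N$), hence $NC$-cr by \cite[Thm.~3.10]{BMR}, applied to the normal subgroup $N$ of the $NC$-cr subgroup $NC$ itself. Since $\mu\in Y(NC)$ and $N\subseteq P_\mu(NC)$, there is $v\in R_u(P_\mu(NC))\subseteq M$ with $v^{-1}Nv\subseteq L_\mu^0$; replacing $\mu$ by $v\cdot\mu$ (which fixes $P_\mu(M)$) I may assume $N\subseteq L_\mu^0$. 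The conjugator $u$ then satisfies $u^{-1}Nu\subseteq L_\mu^0$, so by the observation $u\in C\subseteq M$. Consequently $u\in P_\mu(M)$ and $u^{-1}Hu\subseteq L_\mu\cap M=L_\mu(M)$, placing $H$ in the R-Levi $uL_\mu(M)u^{-1}$ of $P_\mu(M)$; thus $H$ is $M$-cr.

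The crux is exactly this last direction. The point of enlarging $H$ to $M=HC_{G^0}(N)$ is that $C_{G^0}(N)$ is precisely large enough to absorb the conjugators produced by $G$-complete reducibility, but this only becomes visible after $N$ has been moved into the standard R-Levi; and securing that move \emph{inside} $M$ is what the normality of $N$ in $M\cap G^0$, via \cite[Thm.~3.10]{BMR}, delivers. The easy direction, by contrast, needs only the standing hypothesis that $N$ is $G^0$-cr to adjust the cocharacter so that it lies in $Y(M)$.
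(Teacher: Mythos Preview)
Your proof is correct. The reductivity argument and the direction ``$M$-cr $\Rightarrow$ $G$-cr'' are essentially the paper's (the paper phrases the latter contrapositively and invokes an optimal destabilizing R-parabolic, but only uses that $H$ lies in no R-Levi of it, so the content is the same as your direct argument).

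The direction ``$G$-cr $\Rightarrow$ $M$-cr'' is where you and the paper genuinely diverge. The paper dispenses with conjugators altogether: given $\lambda\in Y(M)$ with $H\subseteq P_\lambda(M)$, it simply picks $\mu\in Y(G)$ with $P_\mu=P_\lambda$ and $H\subseteq L_\mu$ (this exists by $G$-complete reducibility); since $N=H\cap G^0\subseteq H\subseteq L_\mu=C_G(\mu)$, the cocharacter $\mu$ automatically centralizes $N$, so $\mu\in Y(C)\subseteq Y(M)$ and $H\subseteq L_\mu(M)\subseteq P_\mu(M)=P_\lambda(M)$. That is the whole argument. Your route---first arranging $N\subseteq L_\mu^0$ via the auxiliary fact that $N$ is $NC$-cr, then using the limit map $c_\mu$ to force the conjugator $u$ into $C$---is correct and the observation about $c_\mu$ is nice, but the preliminary adjustment of $N$ and the appeal to \cite[Thm.~3.10]{BMR} are unnecessary once one notices that any cocharacter whose R-Levi already contains $H$ must centralize $N$. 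What your approach buys is an explicit description of where the $R_u(P_\mu)$-conjugator lands; what the paper's approach buys is brevity, since it never needs to locate $N$ in the Levi beforehand.
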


\begin{proof}
First note that $M$ is a well-defined subgroup of $G$, since $H$ normalizes
$C_{G^0}(H \cap G^0)$. As $C_{G^0}(H \cap G^0)$ is a $G^0$-cr
subgroup (Lemma \ref{lem:2_nec_cond}(ii)), it is reductive,
by \cite[Property 4]{serre1}. 
The same is true for $H \cap G^0$ by assumption.
Hence $(H\cap G^0)C_{G^0}(H \cap G^0)$ is 
the product of two reductive groups and thus is reductive.
As this group contains $M^0$ as a normal subgroup, the group $M$
is reductive as well.

Now first suppose that $H$ is not $G$-cr. Let $\lambda \in Y(G) = Y(G^0)$ 
be an optimal destabilizing cocharacter for $H$ in $G$.
Then $P_\lambda(G^0)$ contains the subgroup $H \cap G^0$, which
is $G^0$-cr by assumption. Hence after replacing $\lambda$ with an
$R_u(P_\lambda)$-conjugate, we may assume that $\lambda$ centralizes
$H \cap G^0$. This implies that $\lambda \in Y(M)$, and
$H \subseteq P_\lambda(M) \subseteq P_\lambda$. Since 
$H$ is not contained in an R-Levi subgroup of $P_\lambda$
(cf.\ Theorem \ref{thm:optimal}), it is not
contained in an R-Levi subgroup of $P_\lambda(M)$. We conclude that
$H$ is not $M$-cr.

Conversely, suppose that $H$ is $G$-cr. 
Let $\lambda \in Y(M)$ and suppose that $H \subseteq P_\lambda(M)$.
To show that $H$ is $M$-cr
we need to show that $H$ is contained in an R-Levi subgroup of $P_\lambda(M)$.
Since $H$ is $G$-cr, we can find a cocharacter $\mu \in Y(G)$, such
that $H \subseteq L_\mu \subseteq P_\mu = P_\lambda$.
In particular, $\mu$ centralizes $H \cap G^0$, so that $\mu \in Y(M)$.
Hence $H \subseteq L_\mu(M) \subseteq P_\mu(M) = P_\lambda(M)$,
as required.
\end{proof}

\begin{rem} \label{rem:O3}
In the situation of Lemma \ref{lem:first_reduction}, the 
subgroup $N = H \cap G^0$ of $H$ is normal in $M$.
Thus, by Theorem \ref{thm:normal}(i), we deduce that
$H$ is $G$-completely reducible if and only if
$H/(H\cap G^0)$ is $M/(H \cap G^0)$-completely reducible.
\end{rem}

\begin{lem} \label{lem:second_reduction}
Let $H$ be a subgroup of $G$ and suppose that $C_{G^0}(H)$ is 
$G^0$-completely reducible.
Then $M = H C_{G^0}(C_{G^0}(H))$ is reductive.
Moreover, $H$ is $G$-completely reducible if and only if it is 
$M$-completely reducible. 
\end{lem}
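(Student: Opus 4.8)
The plan is to mimic the proof of Lemma~\ref{lem:first_reduction}, writing $C := C_{G^0}(H)$ so that $M = H\,C_{G^0}(C)$. First I would record the structural facts needed throughout. Since every element of $C$ centralizes $H$, we have $H\cap G^0 \subseteq C_{G^0}(C)$, and hence $M\cap G^0 = (H\cap G^0)\,C_{G^0}(C) = C_{G^0}(C)$; in particular $M^0 = C_{G^0}(C)^0$. Also $H$ normalizes $C$ and therefore normalizes $C_{G^0}(C)$, so $M$ is a genuine subgroup. By hypothesis $C$ is $G^0$-cr, so its centralizer $C_{G^0}(C)$ is again $G^0$-cr by \cite[Cor.\ 3.17 and \S6]{BMR}, hence reductive by \cite[Property 4]{serre1}. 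Thus $M^0 = C_{G^0}(C)^0$ is reductive and $R_u(M) = R_u(M^0) = 1$, giving reductivity of $M$. The one consequence I would flag for repeated use: any $\lambda \in Y(M)$ has $\lambda(k^*)\subseteq M^0\subseteq C_{G^0}(C)$, so $\lambda$ centralizes $C$ and therefore $C\subseteq L_\lambda^0$.

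For the forward implication (if $H$ is not $G$-cr then $H$ is not $M$-cr) I would argue exactly as in Lemma~\ref{lem:first_reduction}, but using $C$ in place of $H\cap G^0$. Take an optimal destabilizing cocharacter $\lambda$ for $H$ in $G$ (Theorem~\ref{thm:optimal}). Then $C\subseteq N_G(H)\subseteq P_\lambda$, so $C\subseteq P_\lambda(G^0)$; since $C$ is $G^0$-cr we may replace $\lambda$ by an $R_u(P_\lambda)$-conjugate, which leaves $P_\lambda$ (and hence property~(i) of Theorem~\ref{thm:optimal}) unchanged, to arrange that $\lambda$ centralizes $C$, i.e.\ $\lambda \in Y(C_{G^0}(C))\subseteq Y(M)$. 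Then $H\subseteq P_\lambda(M)$, and because every R-Levi subgroup of $P_\lambda(M)$ is of the form $L\cap M$ for an R-Levi subgroup $L$ of $P_\lambda$, the failure of $H$ to lie in any R-Levi of $P_\lambda$ shows it lies in no R-Levi of $P_\lambda(M)$. Hence $H$ is not $M$-cr.

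The converse (if $H$ is $G$-cr then $H$ is $M$-cr) is where I expect the real obstacle. Given $\lambda\in Y(M)$ with $H\subseteq P_\lambda(M)\subseteq P_\lambda$, the $G$-complete reducibility of $H$ lets me choose an R-Levi $L_\mu$ of $P_\lambda$ (so $P_\mu = P_\lambda$) containing $H$; as in Lemma~\ref{lem:first_reduction}, $\mu$ centralizes $H$ and thus $\mu(k^*)\subseteq C$. The difficulty, which does not arise in Lemma~\ref{lem:first_reduction}, is that this no longer directly places $\mu$ in $Y(M)$, since $M$ is built from the \emph{double} centralizer and $\mu(k^*)\subseteq C$ does not force $\mu$ to centralize $C$. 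My proposed fix uses the recorded consequence $C\subseteq L_\lambda^0$: it gives $\mu(k^*)\subseteq C\subseteq L_\lambda^0$, so $\mu\in Y(L_\lambda^0)$. Since $L_\lambda^0\subseteq P_\lambda = P_\mu$, the parabolic subgroup of the connected reductive group $L_\lambda^0$ determined by $\mu$ is all of $L_\lambda^0$, which forces $\mu(k^*)\subseteq Z(L_\lambda^0)$. Thus $\mu$ centralizes $L_\lambda^0\supseteq C$, so $\mu\in Y(C_{G^0}(C))\subseteq Y(M)$, and $H\subseteq L_\mu(M)\subseteq P_\mu(M) = P_\lambda(M)$, proving $H$ is $M$-cr.

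The crux is therefore the last paragraph: promoting the cocharacter $\mu\in Y(G)$ to a cocharacter of $M$. I expect the decisive device to be the observation that a cocharacter whose image lands in $C\subseteq L_\lambda^0$ and which defines the same R-parabolic subgroup as $\lambda$ must be central in $L_\lambda^0$, and hence must centralize $C$. Everything else is a routine adaptation of Lemma~\ref{lem:first_reduction}.
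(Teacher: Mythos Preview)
Your proposal is correct and follows essentially the same approach as the paper. The only minor deviation is in the last step of the converse direction: to show that the cocharacter $\mu$ (with $P_\mu = P_\lambda$ and $\mu(k^*)\subseteq C$) centralizes $C$, the paper applies the ``trivial parabolic $\Rightarrow$ central cocharacter'' principle directly inside the reductive group $C$ (noting $P_\mu(C)=C$ and citing \cite[Lem.~2.4]{BMR}), whereas you apply it inside the larger connected group $L_\lambda^0$; both routes yield $\mu\in Y(C_{G^0}(C))\subseteq Y(M)$ and are equally valid.
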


\begin{proof}
We proceed as in the proof of Lemma \ref{lem:first_reduction}:
Again, $M$ is well-defined since $H$ normalizes $C_{G^0}(C_{G^0}(H))$.
Moreover, $M^0 \subseteq (H \cap G^0) C_{G^0}(C_{G^0}(H)) = 
C_{G^0}(C_{G^0}(H)) \subseteq M$, yielding $M^0 = (C_{G^0}(C_{G^0}(H)))^0$.
Since $C_{G^0}(H)$ is $G^0$-cr by assumption, as before we may conclude that
its centralizer is reductive, so that $M$ is reductive.

Suppose that $H$ is not $G$-cr. Let $\lambda \in Y(G) = Y(G^0)$ be an
optimal destabilizing cocharacter for $H$ in $G$.
By Theorem \ref{thm:optimal}(ii), $P_\lambda$ contains
$C_{G^0}(H)$, which is $G^0$-cr.
Thus we may again assume that $\lambda$ centralizes $C_{G^0}(H)$,
so that $\lambda \in Y(M)$.
As before, we conclude that $H$ is not $M$-cr.

Conversely, suppose that $H$ is $G$-cr. Let $\lambda \in Y(M)$ such
that $H \subseteq P_\lambda(M)$. As $\lambda$ evaluates in $M^0$, it
centralizes $C_{G^0}(H)$, so that $C_{G^0}(H)$ is contained 
in $P_\lambda$.
On the other hand, since $H$ is $G$-cr, we may find $\mu \in Y(G)$ 
such that $P_\mu = P_\lambda$ and such that $\mu \in Y(C_{G^0}(H))$. 
But then $P_\mu(C_{G^0}(H)) = C_{G^0}(H)$, which forces
$\mu$ to centralize $C_{G^0}(H)$ \cite[Lem.~2.4]{BMR}.
So $\mu \in Y(M)$, and $H \subseteq L_\mu(M) \subseteq P_\mu(M) = 
P_\lambda(M)$. As before, this shows that $H$ is $M$-cr.
\end{proof}

\begin{rem}
\label{rem:isog}
 Let $H$ be a subgroup of $G$ and let $\pi\colon G\ra G'$ be an isogeny.  Then $\pi(C_{G^0}(H)^0)= C_{G'^0}(\pi(H))^0$ (see the proof of \cite[Lem.\ 3.1]{BMRT}), so $C_{G'^0}(\pi(H))^0$ is reductive if and only if $C_{G^0}(H)^0$ is. 
\end{rem}

We may write the connected reductive group $G^0$ in the form
\begin{equation} \label{eq:components}
G^0 = S G_1 \cdots G_n,
\end{equation}
where $S$ is the radical of $G^0$
and $G_1, \dots, G_n$ are the simple components of the derived
group of $G^0$.
Any subgroup $H$ of $G$ acts via conjugation on the
derived subgroup of $G^0$ and hence permutes the simple components.
We obtain an induced action of $H$ on the set of indices $\{1,\dots,n\}$.
For $1 \leq i \leq n$, we use the shorthand
\begin{equation*}
\widehat G_i = S \prod_{j\neq i} G_j
\end{equation*}
for the product of all factors in $G^0$ above 
with the exception of $G_i$.

Our next lemma allows us to replace $G$ with a collection of reductive groups
whose identity components are simple.

\begin{lem} 
\label{lem:reduce_to_simple}
Let $H$ be a subgroup of $G$.
For $1 \leq i \leq n$,
let $H_i := N_H(G_i)$ and let 
$\pi_i: H_i G^0 \rightarrow H_iG^0 / \widehat G_i$ be
the natural projection.
Let $I \subseteq \{1,\dots,n\}$ be a subset meeting each $H$-orbit.
Then $H$ is $G$-completely reducible if and only if
$\pi_i(H_i)$ is $\pi_i(H_i G^0)$-completely reducible for each
$i \in I$.
\end{lem}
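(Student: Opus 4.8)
The plan is to reduce, by a single non-degenerate quotient, to the case where $G^0$ is an honest \emph{direct} product of simple groups, and then to analyse R-parabolic subgroups factor-by-factor, spreading cocharacter data across each $H$-orbit of factors. First I would set $\Gamma := \bigcap_{i=1}^n \widehat G_i$. Since $G$ permutes the simple components $G_1,\dots,G_n$ (hence the subgroups $\widehat G_i$) and $G^0$ normalises each $\widehat G_i$, the subgroup $\Gamma$ is normal in $G$; moreover $\Gamma^0=S$ is a torus, so $\rho\colon G\ra\ovl G := G/\Gamma$ is a non-degenerate epimorphism of reductive groups. By Lemma \ref{lem:non-degenerate}, $H$ is $G$-cr if and only if $\ovl H:=\rho(H)$ is $\ovl G$-cr. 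The gain is that $\ovl G{}^0=\rho(G^0)$ embeds into $\prod_i G^0/\widehat G_i$, and a dimension count together with simplicity of each factor $\ovl{G_i}:=G^0/\widehat G_i$ forces $\ovl G{}^0 = \ovl{G_1}\times\cdots\times\ovl{G_n}$, a genuine direct product of simple groups on which $\ovl H$ permutes the factors exactly as $H$ permutes $\{1,\dots,n\}$. As $\Gamma\sse\widehat G_i$, the projection $\pi_i$ factors through $\rho$, and one checks $\pi_i(H_iG^0)\iso Q_i:=\ovl H_i\ovl G{}^0/\prod_{j\neq i}\ovl{G_j}$ compatibly with $\pi_i(H_i)\mapsto q_i(\ovl H_i)$, where $\ovl H_i=N_{\ovl H}(\ovl{G_i})$ and $q_i$ is the associated projection; so each condition is unchanged and it suffices to prove the lemma for $\ovl G$.

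In this reduced situation $Y(\ovl G{}^0)=\bigoplus_i Y(\ovl{G_i})$, so every $\lambda\in Y(\ovl G)$ is a tuple $(\lambda_1,\dots,\lambda_n)$, the parabolic $P_\lambda$ is determined by the tuple of factor-parabolics $(P_{\lambda_k}(\ovl{G_k}))_k$, and membership of a factor-permuting element is governed precisely by $\ovl H$-stability of this tuple. For the backward implication I would assume each $q_i(\ovl H_i)$ ($i\in I$) is $Q_i$-cr and take an arbitrary R-parabolic $P_\lambda\supseteq\ovl H$. Projecting to $Q_i$ gives $q_i(\ovl H_i)\sse P_{\lambda_i}(Q_i)$, so by hypothesis there is $\nu_i\in Y(\ovl{G_i})$ with $P_{\nu_i}(Q_i)=P_{\lambda_i}(Q_i)$ and $q_i(\ovl H_i)\sse L_{\nu_i}(Q_i)=C_{Q_i}(\nu_i(k^*))$. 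The key observation is that the containment $q_i(\ovl H_i)\sse L_{\nu_i}(Q_i)$ forces $\nu_i$ to be $\ovl H_i$-\emph{invariant}; hence, choosing for each $k$ an element $h_k\in\ovl H$ with $h_k\cdot i(k)=k$ (where $i(k)\in I$ represents the orbit of $k$), the cocharacter $\nu$ with $\nu_k:=h_k\cdot\nu_{i(k)}$ is \emph{well defined} and, by construction, fixed by $\ovl H$. Thus $\ovl H\sse C_{\ovl G}(\nu(k^*))=L_\nu$; and since $P_{\nu_k}=P_{\lambda_k}$ for every $k$ we get $P_\nu=P_\lambda$, so $L_\nu$ is an R-Levi of $P_\lambda$ containing $\ovl H$.

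For the forward implication I would argue contrapositively: if some $q_i(\ovl H_i)$ ($i\in I$) is not $Q_i$-cr, pick (Theorem \ref{thm:optimal}) an optimal destabilising $\mu\in Y(\ovl{G_i})$, so that $q_i(\ovl H_i)$ lies in no R-Levi of $P_\mu(Q_i)$ while $N_{Q_i}(q_i(\ovl H_i))\sse P_\mu(Q_i)$. The latter inclusion makes the spread factor-parabolics $P_{\lambda_k}(\ovl{G_k}):=h_k\cdot P_\mu(\ovl{G_i})$ (over the orbit of $i$, with the full $\ovl{G_j}$ on the remaining factors) independent of the choice of $h_k$, so the tuple is $\ovl H$-stable and $\ovl H\sse P_\lambda$ for the resulting $\lambda$. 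Were $\ovl H$ contained in an R-Levi $L_\nu$ of $P_\lambda$, projecting to the $i$-th factor would place $q_i(\ovl H_i)$ in $L_{\nu_i}(Q_i)$, an R-Levi of $P_\mu(Q_i)$, contradicting optimality; hence $\ovl H$ is not $\ovl G$-cr. Finally, since factors in one $\ovl H$-orbit are $\ovl H$-conjugate, the condition at $i$ depends only on its orbit, which is why a transversal $I$ suffices.

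I expect the main obstacle to be organisational rather than conceptual: making precise, in the non-connected group $\ovl G$, the assertion that $P_\lambda$ and the membership of factor-permuting elements are controlled solely by the $\ovl H$-stable tuple $(P_{\lambda_k})_k$, and verifying that the spread cocharacters recover the \emph{full} R-parabolic $P_\lambda$ rather than only its identity component. This is exactly where the permutation bookkeeping and the interplay between the $\ovl H_i$-invariance of the per-factor data and the choice of coset representatives $h_k$ must be handled with care.
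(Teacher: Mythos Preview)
Your overall strategy---spread per-factor data across each $H$-orbit and use an optimal destabilising R-parabolic in the contrapositive direction---is the same as the paper's.  The initial reduction via $\Gamma=\bigcap_i\widehat G_i$ to make $\ovl G{}^0$ an honest direct product is a pleasant simplification that the paper does not make; it lets you write cocharacters as tuples and avoids the bookkeeping with the central intersections $G_i\cap\widehat G_i$ that the paper has to carry around.

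There is, however, a genuine gap, and it is precisely the point you flag at the end---but it is conceptual, not merely organisational.  The claim ``since $P_{\nu_k}=P_{\lambda_k}$ for every $k$ we get $P_\nu=P_\lambda$'' is false: in a non-connected group two R-parabolics with the same identity component need not coincide.  For a concrete failure inside your own set-up, take $\ovl G=S_3\ltimes(\PGL_2)^3$ with $S_3$ permuting the factors, $\ovl H=\langle(12)\rangle$, and $\lambda=(\mu,\mu,\mu)$ for a regular $\mu\in Y(\PGL_2)$; then $P_\lambda=S_3\ltimes B^3$.  With $I=\{1,3\}$ your recipe permits $\nu_1=2\mu$ and $\nu_3=\mu$, whence $\nu=(2\mu,2\mu,\mu)$; one checks $(13)\in P_\lambda$ but $(13)\notin P_\nu$, so $P_\nu\subsetneq P_\lambda$ although $P_\nu^0=P_\lambda^0$.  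The same issue bites in your forward implication: from $P_\nu=P_\lambda$ you obtain only $P_{\nu_i}(Q_i)^0=P_\mu(Q_i)^0$, so $L_{\nu_i}(Q_i)$ need not be an R-Levi of $P_\mu(Q_i)$.

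The paper avoids this by never trying to match R-parabolics at the level of cocharacters.  It works throughout with parabolics and Levis of the \emph{connected} group and then invokes Lemma~\ref{lem:normalizer}: if $M$ is a Levi subgroup of $P^0$ then $N_P(M)$ is an R-Levi subgroup of $P$.  Your argument is repaired by the same device.  In the backward direction, once you have $\ovl H\subseteq L_\nu$ and $P_\nu^0=P_\lambda^0$, the group $\ovl H$ normalises the Levi $L_\nu^0$ of $P_\lambda^0$; since also $\ovl H\subseteq P_\lambda$, Lemma~\ref{lem:normalizer} gives $\ovl H\subseteq N_{P_\lambda}(L_\nu^0)$, an R-Levi of $P_\lambda$.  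In the forward direction, $q_i(\ovl H_i)\subseteq L_{\nu_i}(Q_i)$ together with $q_i(\ovl H_i)\subseteq P_\mu(Q_i)$ yields $q_i(\ovl H_i)\subseteq N_{P_\mu(Q_i)}(L_{\nu_i}(Q_i)^0)$, again an R-Levi of $P_\mu(Q_i)$ by Lemma~\ref{lem:normalizer}, and this is the contradiction you want.  So the missing ingredient is exactly Lemma~\ref{lem:normalizer}; with it in place your cocharacter-based argument goes through, and is otherwise a clean variant of the paper's proof.
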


\begin{proof}
First note that, by construction, $H_i$ and $G^0$ both normalize
the group $\widehat G_{i}$. Hence the map $\pi_i$ is
well-defined. Since $H_i G^0$ is reductive, so is its image under 
$\pi_i$.

To prove the forward implication, suppose the assertion fails for some
$i \in I$.
Up to reordering the indices, we may assume
that $\pi_1(H_1)$ is not $\pi_1(H_1 G^0)$-cr and that
$H$ acts transitively on the set $\{1,\dots,r\}$ for some
$r \geq 1$.
Let $Q$ be an optimal destabilizing R-parabolic subgroup of $\pi_1(H_1 G^0)$
for $\pi_1(H_1)$.
To obtain a contradiction, we show that $\pi_1(H_1)$ is contained
in an R-Levi subgroup of $Q$.
Consider the group $Q^0$. This is a parabolic subgroup
of $\pi_1(H_1 G^0)^0 = \pi_1(G^0) = \pi_1 (G_1)$, hence it is of the form
$Q^0 = \pi_1(P_1)$, where $P_1$ is a parabolic subgroup of $G_1$.

Since $P_1$ contains the centre of $G_1$
and $\pi_1(P_1)=Q^0$ is normalized by $\pi_1(H_1)\subseteq Q$,
it follows that $P_1$ is normalized by $H_1$.
Indeed, let $h \in H_1$. Then $h \cdot P_1 \subseteq G_1$.
On the other hand, $\pi_1(h \cdot  P_1) = \pi_1(h) \cdot \pi_1(P_1) = \pi_1(P_1)$.
Since $\ker(\pi_1) = \widehat G_{1}$, this implies
that $h \cdot P_1 \subseteq P_1 \widehat G_1$.
We conclude that $h \cdot P_1 \subseteq P_1 (G_1 \cap \widehat G_1)
= P_1$, where we have used that the last intersection is central in $G_1$.

For $2 \leq j \leq r$, let $h_j \in H$ be an element satisfying
$h_j  \cdot G_1 = G_j$. Let $P_j = h_j \cdot P_1$, which is a parabolic subgroup of $G_j$.
Since we have just verified that $H_1$ normalizes $P_1$,
the definition of $P_j$ does not depend on the choice of $h_j$ that
transports $G_1$ to $G_j$.

We now consider the parabolic subgroup
$P = S P_1 \cdots P_n$ of $G^0$, where we take $P_j = G_j$ for
$j > r$.
By construction, $P$ is normalized by $H$. Indeed, any $h \in H$
fixes $S$ under conjugation, and permutes the groups $G_1,\dots,G_n$.
If $h$ maps $G_i$ to $G_j$ with $i,j \in \{1,\dots,r\}$,
then $(hh_i) \cdot G_1 = G_j$, and hence $h \cdot P_i = (hh_i) \cdot P_1 = P_j$.
So $h$ also permutes the groups $P_1,\dots,P_r$, and thus
normalizes $P$.

The group $N_G(P)$ is thus an R-parabolic subgroup of $G$
containing $H$ with $N_G(P)^0 = P$
(see \cite[Prop.\ 6.1]{BMR}).
Since $H$ is $G$-cr, it is contained in an R-Levi subgroup $L$ of $N_G(P)$,
hence it normalizes the Levi subgroup $L^0$ of $P$.
We may write $L^0 = S L_1 \cdots L_n$ for certain Levi subgroups $L_j$ of $P_j$.
Then $H_1$ normalizes $L_1$, since $L_1 = L^0 \cap G_1$.
This forces $\pi_1(H_1)$ to normalize a Levi subgroup of
$Q^0 = \pi_1(P_1)$. 
By Lemma \ref{lem:normalizer}, $\pi_1(H_1)$ is 
contained in an R-Levi subgroup of $Q$, yielding a contradiction.

To prove the reverse implication, we again assume after reordering the
indices that $1 \in I$ and that $H$ permutes the set $\{1,\dots,r\}$
transitively for some $r \geq 1$. Assume that $H$ is not $G$-cr,
and that $Q \subseteq G$ is an optimal destabilizing R-parabolic 
subgroup of $G$ containing $H$. Again we want to deduce that $H$
is contained in an R-Levi subgroup of $Q$, contradicting our assumption.

Write $Q^0 = S P_1 \cdots P_n$, where the $P_i$ are parabolic subgroups
of $G_i$. Since $H$ normalizes $Q^0$, $H_1$ normalizes
$P_1 = Q^0 \cap G_1$. This means that $\pi_1(H_1)$ is contained
in $N_{\pi_1(H_1G^0)}(\pi_1(P_1))$, and the latter is an R-parabolic
subgroup of $\pi_1(H_1G^0)$. Since $1 \in I$, the subgroup $\pi_1(H_1)$
is contained in an R-Levi subgroup, say $M$, of this normalizer. Thus
$\pi_1(H_1)$ normalizes 
the Levi subgroup $M^0$ of $\pi_1(P_1)$, which is hence of the form
$\pi_1(L_1)$ for some Levi subgroup $L_1$ of $P_1$.

Since $L_1$ contains the centre of $G_1$, as in the proof of the forward
implication (where we have proved that $H_1$ normalizes $P_1$), we may
conclude that $H_1$ normalizes $L_1$.
Choosing again elements $h_j \in H$ with $h_j \cdot G_1 = G_j$ for 
$2 \leq j \leq r$, we obtain well-defined Levi subgroups
$L_j := h_j \cdot L_1$ of $h_j \cdot P_1 = P_j$, where the latter equality
follows from $P_j = Q^0 \cap G_j$.
Proceeding similarly for the other $H$-orbits on $\{1,\dots,n\}$ 
(each of which contains an element of $I$ by assumption),
we construct an $H$-stable Levi subgroup
$L = SL_1\cdots L_n$ of $Q^0$. As before, 
by Lemma \ref{lem:normalizer}, $H$ is contained in an R-Levi subgroup of $Q$,
which gives the desired contradiction. 
This finishes the proof.
\end{proof}

\section{Cyclic subgroups}

According to Theorem \ref{thm:geometric}, a cyclic subgroup of
$G$ is $G$-completely reducible if and only if the conjugacy class
of a generator is closed. With this characterization we reformulate
a recent result of Guralnick and Malle, see \cite[Thm.\ 2.3]{GM:2013}:

\begin{thm} 
\label{thm:GM}
Let $H$ be a cyclic subgroup of $G$. Then
$H$ is $G$-completely reducible if and only if
$C_{G^0}(H)$ is reductive.
\end{thm}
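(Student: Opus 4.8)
The plan is to deduce the statement from the geometric criterion of Theorem~\ref{thm:geometric}. Writing $H = \langle h\rangle$, that criterion identifies $G$-complete reducibility of $H$ with closedness of the conjugacy class $G\cdot h$ in $G$. Since centralizing the single element $h$ is the same as centralizing all of $H$, we have $C_{G^0}(H) = C_{G^0}(h)$; and because reductivity depends only on the identity component, together with the fact that $C_G(h)^0$ is connected and hence contained in $G^0$, giving $C_{G^0}(h)^0 = C_G(h)^0$, the group $C_{G^0}(H)$ is reductive if and only if $C_G(h)^0$ is. Thus the theorem reduces to the assertion that $G\cdot h$ is closed if and only if $C_G(h)^0$ is reductive, and it is this latter equivalence I would establish.

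One implication is essentially formal. If $G\cdot h$ is closed then, as a closed subvariety of the affine variety $G$, it is itself affine; since $G\cdot h \cong G/C_G(h)$ as $G$-varieties, Matsushima's criterion forces the stabilizer $C_G(h)$, and hence its identity component $C_G(h)^0 = C_{G^0}(h)^0$, to be reductive. This settles the `only if' direction with no restriction on the characteristic and without appealing to Guralnick and Malle.

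The reverse implication carries the real weight, and I would route it through semisimplicity. A conjugacy class in a (possibly non-connected) reductive group is closed precisely when its representative is semisimple, and the forward half of this (semisimple $\Rightarrow$ closed) is already recorded in the introduction; so it suffices to show that $C_G(h)^0$ reductive implies $h$ semisimple. The unipotent part $h_u$ of the Jordan decomposition lies in $C_G(h)^0$ and is in fact central there, since any element commuting with $h$ commutes with both Jordan factors. In characteristic $0$ a nontrivial $h_u$ then generates a central connected unipotent subgroup, contradicting reductivity, which closes the argument cleanly. In characteristic $p$ the passage from $h_u \neq 1$ to a genuine connected normal unipotent subgroup of $C_G(h)^0$ is delicate, because $h_u$ now has finite order and need not lie in an evident one-parameter unipotent subgroup. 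This is exactly the point at which I expect to invoke the recent theorem of Guralnick and Malle \cite[Thm.\ 2.3]{GM:2013}, which supplies the required reductivity criterion for centralizers in arbitrary characteristic; the remaining work is then the bookkeeping of translating their conclusion into the language of R-parabolic subgroups via Theorem~\ref{thm:geometric}. The main obstacle is therefore not the reduction but the characteristic-$p$ heart of the centralizer criterion, which I would import rather than reprove.
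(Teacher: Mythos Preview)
The paper does not give an independent proof of this statement: the sentence preceding Theorem~\ref{thm:GM} simply observes that Theorem~\ref{thm:geometric} identifies $G$-complete reducibility of $\langle h\rangle$ with closedness of $G\cdot h$, and then cites Guralnick--Malle \cite[Thm.~2.3]{GM:2013} for the equivalence of the latter with reductivity of the centralizer. Your proposal ultimately rests on the same citation, and to that extent the two agree.

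Your chosen route through semisimplicity for the reverse implication, however, contains a genuine error. The assertion that in a possibly non-connected reductive group a conjugacy class is closed \emph{precisely when} its representative is semisimple is false. Proposition~\ref{prop:triality}(i) supplies a counterexample: in characteristic~$3$ the triality graph automorphism $\sigma\in\Aut(D_4)$ has order~$3$ and is therefore unipotent, yet its $\Aut(D_4)$-conjugacy class is closed (equivalently, $C_{D_4}(\sigma)$ is simple of type $G_2$, hence reductive). Consequently the implication ``$C_G(h)^0$ reductive $\Rightarrow$ $h$ semisimple'' that you set out to establish is itself false in positive characteristic, and no appeal to Guralnick--Malle can rescue it, since their theorem does not assert semisimplicity. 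What they actually provide is the direct equivalence between closedness of $G\cdot h$ and reductivity of the centralizer; once you invoke that, the detour through the Jordan decomposition becomes superfluous and the argument reduces to the paper's one-line citation.

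Your forward direction via Matsushima's criterion is correct and is a legitimate alternative to the paper's route through Lemma~\ref{lem:2_nec_cond}(ii). The characteristic-zero half of your reverse argument is also correct---there a nontrivial $h_u$ really does sit inside a one-parameter unipotent subgroup central in $C_G(h)^0$---but in characteristic zero the whole theorem is elementary anyway.
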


By Lemma \ref{lem:2_nec_cond}(ii), we deduce the following result.

\begin{cor} \label{cor:cent_of_cyclic}
Let $H$ be a cyclic subgroup of $G$.
Then $C_{G^0}(H)$ is $G^0$-completely reducible if and only if it is reductive.
\end{cor}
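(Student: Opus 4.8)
The plan is to establish the two implications of the biconditional separately. One direction is a general structural fact about complete reducibility valid for an arbitrary subgroup, while the cyclicity hypothesis on $H$ enters only in the other direction, through the Guralnick--Malle criterion (Theorem~\ref{thm:GM}) fed into Lemma~\ref{lem:2_nec_cond}(ii).

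First I would dispatch the easy implication: suppose $C_{G^0}(H)$ is $G^0$-completely reducible. By Serre's Property~4 (the same fact already invoked in the proof of Lemma~\ref{lem:first_reduction}, via \cite[Property 4]{serre1}), every $G^0$-cr subgroup of $G^0$ is reductive; hence $C_{G^0}(H)$ is reductive. This step uses nothing about $H$ being cyclic and is immediate.

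For the converse, suppose instead that $C_{G^0}(H)$ is reductive. This is where I would use that $H$ is cyclic: by Theorem~\ref{thm:GM}, reductivity of $C_{G^0}(H)$ is precisely equivalent to $H$ being $G$-completely reducible, so $H$ is $G$-cr. Now Lemma~\ref{lem:2_nec_cond}(ii) applies directly and yields that $C_{G^0}(H)$ is $G^0$-completely reducible, which finishes the argument.

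The entire depth of the statement sits inside Theorem~\ref{thm:GM}; once that is granted, the corollary is just a short chain of previously established results, so I do not expect any genuine obstacle in assembling the proof. The only point meriting a moment's care is recognizing the forward implication as the general ``cr implies reductive'' phenomenon rather than something requiring the cyclic hypothesis, and correctly orienting Theorem~\ref{thm:GM} so that reductivity of the centralizer is the input that delivers $G$-complete reducibility of $H$.
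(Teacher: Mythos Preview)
Your proposal is correct and matches the paper's own argument essentially verbatim: the paper deduces the corollary from Theorem~\ref{thm:GM} together with Lemma~\ref{lem:2_nec_cond}(ii), with the ``$G^0$-cr implies reductive'' direction being the standard fact from \cite[Property 4]{serre1}. There is nothing to add.
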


Combining some of our previous reductions,
we obtain the following weaker version of 
 Theorem \ref{thm:GM}.
This is of independent interest, as our arguments allow us to avoid 
the case-by-case considerations that are needed for the
proof of Theorem \ref{thm:GM}.

\begin{thm} \label{thm:cyclic}
Let $H \subseteq G$ be a cyclic subgroup 
of prime order.
Then $H$ is $G$-completely reducible if and only if $C_{G^0}(H)$ is 
reductive.
\end{thm}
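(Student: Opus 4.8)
The plan is to prove this by reducing to successively simpler situations, using the reduction lemmas already established, and then to handle the base case directly.  The forward direction is immediate: if $H$ is $G$-cr, then $C_{G^0}(H)$ is $G^0$-cr by Lemma \ref{lem:2_nec_cond}(ii), and a $G^0$-cr subgroup is reductive by \cite[Property 4]{serre1}.  So the content lies entirely in the reverse implication: assuming $C_{G^0}(H)$ is reductive, show $H$ is $G$-cr.  I would organize the reverse implication around the two features of $H$ we are given: it is cyclic of \emph{prime} order, and its centralizer in $G^0$ is reductive.

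First I would reduce the ambient group.  Since $C_{G^0}(H)$ is reductive, it is $G^0$-cr by Corollary \ref{cor:cent_of_cyclic} (this is exactly where the prime-order / cyclic hypothesis lets me invoke that corollary), so Lemma \ref{lem:second_reduction} applies and lets me replace $G$ by the smaller reductive group $M = H\,C_{G^0}(C_{G^0}(H))$ without changing whether $H$ is $G$-cr.  Next, using Corollary \ref{cor:quotient} I may pass to $G/C_G(G^0)$, so that I may assume $H$ acts faithfully on $G^0$; and using Lemma \ref{lem:reduce_to_simple} I may reduce to the case where the simple components of $G^0$ are permuted in a single orbit, ultimately arranging (after the quotient by $\widehat G_i$) that $G^0$ is essentially simple.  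The effect of these reductions is to split into two qualitatively different cases according to how the prime-order generator $h$ interacts with $G^0$: either $h$ induces an \emph{inner} automorphism of $G^0$, in which case I can absorb it — arranging $h\in G^0$ after adjusting by an element of $G^0$ — and appeal to the connected theory; or $h$ induces a genuinely \emph{outer} automorphism, i.e.\ $H$ acts on $G^0$ by outer automorphisms.

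In the inner case, once $h$ (or a representative of the coset) lies in $G^0$, the statement becomes one about a cyclic subgroup of the connected group $G^0$, and I would invoke Theorem \ref{thm:geometric}: $H$ is $G$-cr if and only if the conjugacy class of its generator is closed, which for an order-$p$ element I can analyze via the reductivity of $C_{G^0}(H)$.  In the outer case, where $H$ acts by outer automorphisms and $G^0$ is (essentially) simple, I want to show directly that $H$ cannot be destabilized.  Here the cocharacter formalism of Theorem \ref{thm:optimal} is the right tool: if $H$ were not $G$-cr, an optimal destabilizing cocharacter $\lambda\in Y(G)=Y(G^0)$ produces an R-parabolic $P_\lambda$ containing $N_G(H)$ but no R-Levi containing $H$.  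Since $\lambda$ evaluates in $G^0$ and $h$ normalizes the fixed-point data of $\lambda$, I would extract information about $C_{G^0}(H)$ lying in $L_\lambda$, and use the reductivity hypothesis together with the structure of R-Levi subgroups (Lemma \ref{lem:normalizer}, and the description of R-Levi subgroups of $P_\lambda(H)$ in the Remark) to build an R-Levi subgroup of $P_\lambda$ actually containing $H$, contradicting optimality.

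The main obstacle I expect is the outer case, and specifically controlling the interaction between the destabilizing cocharacter $\lambda$ and the order-$p$ automorphism $h$.  In characteristic $p$ with $\lvert H\rvert=p$, the unipotent subtleties are exactly where pathologies of $G$-complete reducibility live, so the delicate point is to conjugate $\lambda$ by an element of $R_u(P_\lambda)$ (using that the relevant fixed-point subgroup is $G^0$-cr, hence reductive, hence a reasonable target for such a conjugation argument) so that $h$ and $\lambda$ become simultaneously compatible — i.e.\ so that $h$ normalizes an R-Levi subgroup of $P_\lambda$.  Making this compatibility precise, so that the reductivity of $C_{G^0}(H)$ genuinely forces $H$ into an R-Levi, is the crux; everything before it is a matter of assembling the reduction lemmas in the correct order.
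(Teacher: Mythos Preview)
Your plan has a circularity problem at the very first step of the reverse implication.  You invoke Corollary~\ref{cor:cent_of_cyclic} to pass from ``$C_{G^0}(H)$ reductive'' to ``$C_{G^0}(H)$ is $G^0$-cr,'' but that corollary is deduced in the paper \emph{from} Theorem~\ref{thm:GM} (Guralnick--Malle), of which Theorem~\ref{thm:cyclic} is explicitly meant to be an independent, case-free special case.  If Theorem~\ref{thm:GM} is available, Theorem~\ref{thm:cyclic} is immediate and there is nothing to prove.  The paper instead establishes that $C_{G^0}(H)$ is $G^0$-cr \emph{directly}: assuming not, an optimal destabilizing parabolic $P$ for $C_{G^0}(H)$ is normalized by $H$; the centre of $R_u(P)$, viewed as an $\mathbb{F}_p$-vector space with an action of the order-$p$ generator, has infinitely many fixed points by a Jordan-block count, producing a positive-dimensional unipotent normal subgroup of $C_{G^0}(H)$ and contradicting reductivity.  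This elementary fixed-point argument is the genuine content you are missing.

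After the reduction to $M = H\,C_{G^0}(C_{G^0}(H))$ via Lemma~\ref{lem:second_reduction}, you propose a further inner/outer case split and a delicate optimal-cocharacter analysis, but this is unnecessary and your outline of it is not close to a proof.  The paper finishes cleanly: by construction of $M$, each simple component $M_i$ of $M^0$ satisfies $C_{M_i}(H)\subseteq Z(M^0)$, hence is finite; since $H$ has prime order, $N_H(M_i)\neq 1$ would mean $H$ acts on the simple group $M_i$ with finite fixed-point set, contradicting Steinberg's theorem \cite[Thm.~10.13]{steinberg}.  Thus every $H_i$ in Lemma~\ref{lem:reduce_to_simple} is trivial, and that lemma gives $H$ is $M$-cr at once.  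The Steinberg fixed-point theorem, not an optimal-cocharacter manoeuvre, is the missing key.
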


\begin{proof}
The forward implication is clear, by Lemma \ref{lem:2_nec_cond}(ii).
Conversely, assume that $C_{G^0}(H)$ is reductive.
Since linearly reductive subgroups are $G$-cr
(see \cite[Lem.\ 2.6]{BMR}), we may
assume that $k$ has positive characteristic $p$ that
coincides with the order of $H$.

We first show that $C_{G^0}(H)$ is $G^0$-cr.
Suppose this fails, and let $P \subseteq G^0$ be an 
optimal destabilizing parabolic subgroup for $C_{G^0}(H)$ in $G^0$.
Then $H$ normalizes $P$, by Theorem \ref{thm:optimal}(ii).
Let $U$ be the unipotent radical of $P$.
Then $Z(U)$ has positive dimension and is normalized by $H$ and $P$.
Up to passing to a characteristic subgroup
(the subgroup of elements of order dividing $p$),
we may assume that $Z(U)$ has exponent $p$. 
Thus $Z(U)$ has the structure of an 
$\FF_p$-vector space of infinite dimension
with an $\FF_p$-linear $H$-action.
As $H$ is generated by an element $h \in H$ of order $p$, there
must be infinitely many fixed points of $H$ on $Z(U)$.
Indeed, on any $H$-stable finite dimensional 
subspace $W$ of $Z(U)$ the automorphism
induced by $h$ may be brought into Jordan normal form with
block sizes bounded by $p$ (the Jordan normal form exists as $h$ has only 
eigenvalue $1 \in \FF_p$).
As each block contributes at least
$p-1$ fixed points, $H$ has at least $(p-1)[\dim W/p]$ fixed
points on $W$, and we can make $\dim_{{\mathbb F}_p} W$ arbitrarily large. 
Taking the identity component of the $H$-fixed points
on $Z(U)$ hence yields a 
non-trivial, connected, normal, unipotent subgroup of $C_{G^0}(H)$,
contradicting the reductivity assumption. We thus
conclude that $C_{G^0}(H)$ is $G^0$-cr.

By Lemma \ref{lem:second_reduction},
it therefore suffices to show that $H$ is $M$-cr,
where $M = H C_{G^0}(C_{G^0}(H))$.
Let $M_1,\dots,M_r$ be the simple components of $M^0$.
By definition of $M$, $C_{M_i}(H) \subseteq M_i \cap Z(M^0)$ is
finite for each $i$.
This forces $H_i = N_{H}(M_i) = 1$ for each $i$.
Indeed, since $H$ has prime order,
$H_i \neq 1$ would yield that $H$ normalizes $M_i$,
but due to a result of Steinberg (cf.\ \cite[Thm.\ 10.13]{steinberg}),
no non-trivial cyclic group can act on a simple group via
algebraic automorphisms with only finitely many fixed points.
By Lemma \ref{lem:reduce_to_simple},
we conclude that $H$ is $M$-cr, as required.
\end{proof}

\section{Outer automorphisms for \texorpdfstring{$D_4$}{D4}}
\label{sec:D4}

In this section, let $D_4$ denote an adjoint simple group of type $D_4$.
Amongst the simple groups
$D_4$ has the largest outer automorphism group,
in that $\Out(D_4)\cong S_3$,
the symmetric group on $3$ letters.
We may identify $\Out(D_4)$ with the set of graph automorphisms
in $\Aut(D_4)$
induced by the symmetries of the Dynkin diagram.
However, there are other subgroups isomorphic to $S_3$ in $\Aut(D_4)$
that act via outer automorphisms.
As this is the only situation where outer automorphisms of
a simple group arise that is not covered by
Theorem \ref{thm:cyclic}, we treat this case 
separately in this section.

Let $T$ be a maximal torus of $D_4$ with associated root system
$\Phi$. Let $\Delta=\{\alpha,\beta,\gamma,\delta\}$ be a set of simple
roots for $\Phi$, where $\delta$ is the unique simple root that is
non-orthogonal to every other simple root.
Let $\lambda = \omega_\delta^\vee \in Y(T)$ be the fundamental dominant
coweight determined by 
$\langle \alpha, \lambda \rangle = 
\langle \beta, \lambda \rangle =
\langle \gamma, \lambda \rangle = 0$, $\langle \delta, \lambda \rangle = 1$.
For $\epsilon \in \Phi$ we denote by $u_\epsilon:
\mathbb{G}_a \rightarrow U_\epsilon$ 
a fixed root homomorphism onto the corresponding root subgroup of $G$.

Let $\sigma \in \Aut(D_4)$ be the triality graph automorphism determined by
requiring that for all $c\in k$,
\begin{align*}
\sigma(u_\alpha(c))& = u_\beta(c),& 
\sigma(u_\beta(c))& = u_\gamma(c),& 
\sigma(u_\gamma(c))& = u_\alpha(c),&
\sigma(u_\delta(c))& = u_\delta(c).
\end{align*}
Then $C_{D_4}(\sigma)$ is a simple group of type $G_2$. In fact,
$\tilde{T} = C_T(\sigma)$ is a maximal torus of $C_{D_4}(\sigma)$,
and $\tilde{\alpha} = \alpha|_{\tilde{T}} = \beta|_{\tilde{T}}
= \gamma|_{\tilde{T}}$ and $\tilde{\beta} = \delta|_{\tilde{T}}$ 
form a pair of simple roots with respect to $\tilde{T}$,
with corresponding root groups given by
$u_{\tilde{\alpha}}(c) = u_\alpha(c)u_\beta(c)u_\gamma(c)$,
$u_{\tilde{\beta}}(c) = u_\delta(c)$.
Since $\lambda$ evaluates in $\tilde{T}$, we may regard it as
an element of $Y(\tilde{T})$; we denote this element by
$\tilde{\lambda}$.
We have $\langle \tilde{\alpha}, \tilde{\lambda} \rangle = 0$,
$\langle \tilde{\beta}, \tilde{\lambda} \rangle = 1$.

We begin with a detailed description of triality in the 
particular case where the 
ground field has characteristic three, using the 
results
of \cite{CKT} and \cite{elduque}.

\begin{prop} \label{prop:triality}
Assume that $p=3$.
In $\Aut(D_4)$ there are exactly two conjugacy classes of cyclic
groups of order three generated by outer automorphisms.
Let $\langle \sigma_1 \rangle$, $\langle \sigma_2 \rangle$
be representatives of the respective classes,
and let $M_i = C_{D_4}(\sigma_i)$ ($i=1,2$).
Then we may choose
the labelling such that the following holds:
\begin{itemize}
\item[(i)] $M_1$ is a simple group of type $G_2$;
moreover $\Aut(D_4)\cdot \sigma_1$, the orbit of $\sigma_1$ under conjugation,
is closed in $\Aut(D_4)$.
\item[(ii)] 
$M_2$ is an $8$-dimensional group
with $5$-dimensional unipotent radical and corresponding reductive
quotient isomorphic to $\SL_2$; the orbit $\Aut(D_4)\cdot \sigma_2$ is not
closed and contains $\sigma_1$ in its closure.

\item[(iii)] We may take $\sigma_1 = \sigma$.

\item[(iv)]
Let $u =  u_{\alpha + \beta + \gamma + 2\delta}(1)$.
Then we may take $\sigma_2= \sigma u$.

\item[(v)] With the choices in (iii) and (iv), we have
$M_2 = C_{M_1}(u) = \langle U_{\tilde{\alpha}}, U_{-\tilde{\alpha}}\rangle
\ltimes R_u(P_{\tilde{\lambda}}) \subseteq P_{\tilde{\lambda}}$, where $P_{\tilde{\lambda}}$ denotes $P_\lambda(M_1)$.
\end{itemize}
\end{prop}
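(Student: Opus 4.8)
The plan is to establish the structural statements (i), (ii), (v) by computing centralizers explicitly, and then to read off the orbit–closure assertions from Theorems \ref{thm:geometric} and \ref{thm:cyclic}. Throughout I identify $\Aut(D_4)$ with $D_4\rtimes S_3$, so that its identity component is $D_4$ and the order-three outer automorphisms lie in the two components mapping to the $3$-cycles of $\Out(D_4)\cong S_3$. I set $\sigma_1=\sigma$, which gives (iii); that $M_1=C_{D_4}(\sigma)$ is simple of type $G_2$ is the classical description of triality and provides the root data $\tilde\alpha,\tilde\beta,\tilde\lambda$ recorded above, with $\theta:=\alpha+\beta+\gamma+2\delta$ restricting to the highest (long) root $3\tilde\alpha+2\tilde\beta$ of $G_2$ and $u=u_\theta(1)$ lying in $M_1$.

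For (iv) the decisive point is special to characteristic three. As $u\in M_1=C_{D_4}(\sigma)$ we have $\sigma(u)=u$, so $\sigma$ and $u$ commute and $(\sigma u)^3=\sigma^3u^3=u_\theta(3)=1$ since $p=3$; hence $\sigma_2:=\sigma u$ is outer of order three. It is not $\Aut(D_4)$-conjugate to $\sigma_1$, because its centralizer will turn out to be non-reductive whereas $M_1=G_2$ is reductive, and conjugate elements have isomorphic centralizers. Combined with the classification of \cite{CKT} and \cite{elduque}, which yields exactly two $\Aut(D_4)$-classes of order-three outer cyclic subgroups in characteristic three, this identifies $\langle\sigma_2\rangle$ as the second class and proves the counting assertion.

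The heart of the argument is the determination of $M_2$. I first compute $C_{M_1}(u)=C_{G_2}(u)$ by the Chevalley relations: since $\theta\pm\tilde\alpha\notin\Phi$ and $\langle\theta,\tilde\alpha^\vee\rangle=0$ the rank-one subgroup $\langle U_{\tilde\alpha},U_{-\tilde\alpha}\rangle$ centralizes $u$, and since $\theta$ is the highest root each of the five positive root groups of $R_u(P_{\tilde\lambda})$ (those with positive $\tilde\beta$-coefficient) centralizes $u$ as well. These relations remain valid in characteristic three because the relevant structure constants are units; a check that the remaining negative root groups and the complementary torus direction fail to centralize $u$ (here $\theta-\tilde\beta=3\tilde\alpha+\tilde\beta\in\Phi$ with structure constant $\pm1$) shows that $C_{G_2}(u)=\langle U_{\tilde\alpha},U_{-\tilde\alpha}\rangle\ltimes R_u(P_{\tilde\lambda})$, an $8$-dimensional group with reductive quotient $\SL_2$ and $5$-dimensional unipotent radical. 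It then remains to prove $M_2=C_{D_4}(\sigma u)$ equals $C_{M_1}(u)=C_{D_4}(\sigma)\cap C_{D_4}(u)$; the inclusion $\supseteq$ is immediate because $\sigma$ and $u$ commute, and for the reverse I compute the fixed space of $d\sigma_2=d\sigma\circ\Ad(u)$ on $\mathfrak{d}_4$ and show it is again $8$-dimensional, so that the identity components agree, and finally match component groups.

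I expect this last equality to be the main obstacle. Because $p=3$, the automorphism $\sigma_2$ has order equal to the characteristic and is therefore unipotent (indeed $(d\sigma_2-1)^3=0$), so its Jordan decomposition is trivial and gives no splitting $C(\sigma_2)=C(\sigma)\cap C(u)$; the identity must be forced by the explicit characteristic-three computation, which is exactly what the analysis of \cite{CKT} and \cite{elduque} supplies and which has no analogue in other characteristics (where $\sigma u$ fails to have order three). Granting this, (i) and (ii) follow from the general machinery: $M_1=G_2$ is reductive, so by Theorem \ref{thm:cyclic} the group $\langle\sigma_1\rangle$ is $\Aut(D_4)$-completely reducible and hence $\Aut(D_4)\cdot\sigma_1$ is closed by Theorem \ref{thm:geometric}, whereas $M_2$ has nontrivial unipotent radical and the same two results show $\Aut(D_4)\cdot\sigma_2$ is not closed. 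To place $\sigma_1$ in its closure I degenerate $u$ directly: the cocharacter $\lambda=\omega_\delta^\vee$ lies in $Y(\tilde T)$ and so commutes with $\sigma$, while $\langle\theta,\lambda\rangle=2>0$, so $\lim_{a\to0}\lambda(a)\,\sigma_2\,\lambda(a)^{-1}=\sigma\cdot\lim_{a\to0}u_\theta(a^2)=\sigma=\sigma_1$, giving $\sigma_1\in\overline{\Aut(D_4)\cdot\sigma_2}$.
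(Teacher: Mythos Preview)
Your argument is correct and largely parallels the paper's, but you take a genuinely different route to the orbit-closure assertions in (i) and (ii). The paper establishes that $\sigma$ and $\sigma u$ are non-conjugate, and that the orbit of $\sigma$ is closed, by quoting Spaltenstein's classification of unipotent classes in the coset $\sigma D_4$ \cite[\S I, Prop.~3.2; \S II, Lem.~1.15]{spalt}. You instead bootstrap from the paper's own Theorem~\ref{thm:cyclic} and Theorem~\ref{thm:geometric}: once the centralizer structures are in hand, reductivity of $M_1$ forces $\Aut(D_4)\cdot\sigma_1$ closed, non-reductivity of $M_2$ forces $\Aut(D_4)\cdot\sigma_2$ non-closed, and the centralizers being non-isomorphic gives non-conjugacy. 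This is a pleasant internal argument that avoids an external case-by-case reference; there is no circularity, since the proof of Theorem~\ref{thm:cyclic} does not use Proposition~\ref{prop:triality}.

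For the equality $M_2=C_{M_1}(u)$ in (v), both you and the paper ultimately lean on \cite{CKT} and \cite{elduque}: the paper quotes these for $\dim M_2=8$ and then compares dimensions with the explicit $8$-dimensional subgroup, while you propose to compute the fixed space of $d\sigma_2$ on $\mathfrak{d}_4$ directly but concede the same references supply this. Note that in characteristic $3$ one only has $\Lie(M_2)\subseteq(\mathfrak{d}_4)^{d\sigma_2}$ in general, so your Lie-algebra computation gives the needed upper bound $\dim M_2\le 8$, which together with $C_{M_1}(u)\subseteq M_2$ and $\dim C_{M_1}(u)=8$ pins down the identity component; the passage to full equality (your ``match component groups'') is handled in the literature and is also treated somewhat implicitly in the paper. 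The limit computation placing $\sigma_1$ in the closure of the $\sigma_2$-orbit is identical in both approaches.
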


\begin{proof}
By \cite[Cor.\ 6.5, Thm.\ 9.1]{CKT}, there are precisely two
conjugacy classes of cyclic groups of order three generated by
outer automorphisms, which are denoted by type I and type II,
respectively.
They are distinguished by the structure of the corresponding
fixed point groups, where type I yields a group of type $G_2$,
whereas type II in characteristic 3 gives a group with the
structure described in (ii) (see \cite[\S9]{CKT} together with 
\cite[Thm.\ 7]{elduque}). This implies the first statements of
(i) and (ii), as well as (iii).

Working in the algebraic group $\Aut(D_4)$, 
using $\sigma(\lambda) = \lambda$
and $\langle \alpha + \beta + \gamma + 2\delta , \lambda \rangle
= 2 > 0$
we compute that
\begin{equation*}
\lim_{a \rightarrow 0} \lambda(a) \sigma u \lambda(a)^{-1}
= \sigma \lim_{a \rightarrow 0} \lambda(a) u \lambda(a)^{-1} =
\sigma,
\end{equation*}
which proves that $\sigma$ is contained in the closure of 
the orbit through $\sigma u$.
By \cite[Thm.\ 3.3]{GIT}, to show that $\sigma$ and 
$\sigma u$ are not conjugate it is enough to show that they are
not $D_4$-conjugate. 
This non-conjugacy follows from 
\cite[\S I, Prop.\ 3.2]{spalt}, as $\sigma$ and $\sigma u$
are given in \emph{loc.~cit.}\ as class representatives for distinct unipotent
classes in $\sigma D_4$. 
Moreover, by \cite[\S II, Lem.\ 1.15]{spalt}, the orbit
through $\sigma$ is closed. This proves (iv) and 
the remaining assertions of (i) and (ii).

To prove (v), we first note that $R_u(P_{\tilde{\lambda}})$ 
consists of the root groups for the roots
$\tilde{\beta}, \tilde{\alpha}+\tilde{\beta}$,
$2\tilde{\alpha} + \tilde{\beta}$,
$3\tilde{\alpha} + \tilde{\beta}$ and
$3\tilde{\alpha} + 2\tilde{\beta}$.
In particular, the semi-direct product 
$\langle U_{\tilde{\alpha}}, U_{-\tilde{\alpha}} \rangle
\ltimes R_u(P_{\tilde{\lambda}})$ has dimension 8 and is
contained in $C_{M_1}(u) = C_{M_1}(u_{3\tilde{\alpha}
+2\tilde{\beta}}(1))$.
As $\tilde{\lambda}$ centralizes $\pm\tilde{\alpha}$,
the semi-direct product is also contained in $P_{\tilde{\lambda}}$.
Since clearly $C_{M_1}(u) \subseteq M_2$, the assertion (v) follows
by comparing dimensions. This finishes the proof.
\end{proof}

We can now characterize $G$-complete reducibility in
the case where $G^0 = D_4$ and $H$ maps isomorphically onto the full
group of outer automorphisms of $D_4$.
The following result is the analogue of Theorem \ref{thm:cyclic}.

\begin{thm} \label{thm:D4}
Let $H$ be a subgroup of $G$. Assume that
$G^0 = D_4$ and $H \cong S_3$ acts by outer automorphisms on
$G^0$.
Then $H$ is $G$-completely reducible if and only if $C_{G^0}(H)$ is 
reductive.
\end{thm}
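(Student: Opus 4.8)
The plan is to prove the two implications separately. The forward implication is immediate: if $H$ is $G$-cr then $C_{G^0}(H)$ is $G^0$-cr by Lemma~\ref{lem:2_nec_cond}(ii), and every $G^0$-cr subgroup is reductive (Serre's Property~4, cf.\ \cite{serre1}). All the content lies in the reverse implication, so assume from now on that $C_{G^0}(H)$ is reductive and aim to show $H$ is $G$-cr. If $p\nmid 6$ then $H\cong S_3$ is linearly reductive and hence $G$-cr by \cite[Lem.\ 2.6]{BMR}, so I may assume $p\in\{2,3\}$. Using Corollary~\ref{cor:quotient} together with Remark~\ref{rem:isog} I would first replace $G$ by $G/C_G(G^0)$; since $G^0=D_4$ is adjoint, $C_G(G^0)$ is finite, the restriction of the projection to $G^0$ is an isomorphism, and $H$ maps isomorphically onto $\Out(D_4)\cong S_3$. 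Thus I may assume $G=\Aut(D_4)$ and $H$ is a complement to $G^0$. Fix generators $\sigma,\tau\in H$ with $\sigma$ of order $3$, $\tau$ of order $2$ and $\tau\sigma\tau^{-1}=\sigma^{-1}$, and set $A=\langle\sigma\rangle$ and $C=C_{G^0}(A)=C_{G^0}(\sigma)$. Since $\tau$ normalizes $A$, it normalizes $C$, and $C_{G^0}(H)=C_C(\tau)$.

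Suppose first that $p=3$. Then $H/A\cong\ZZ/2\ZZ$ is linearly reductive, so by Theorem~\ref{thm:normal}(ii) $H$ is $G$-cr if and only if $A$ is $G$-cr, and since $A$ is cyclic of prime order, Theorem~\ref{thm:cyclic} shows this holds if and only if $C$ is reductive. It therefore suffices to prove that $C_C(\tau)$ is reductive if and only if $C$ is reductive. By Proposition~\ref{prop:triality}, $A$ is conjugate either to $\langle\sigma_1\rangle$ (type~I), in which case $C\cong G_2$ is reductive, or to $\langle\sigma_2\rangle$ (type~II), in which case $C\cong M_2$ is non-reductive with $\dim R_u(C)=5$. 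In the type~I case $C_C(\tau)$ is reductive, because $\tau$ generates a group of order $2$, which is linearly reductive in characteristic $3$, and the fixed points of a linearly reductive group acting on a reductive group are reductive. It remains to treat the type~II case, where I must show that $C_C(\tau)=C_{M_2}(\tau)$ is non-reductive; this is the heart of the matter and I return to it below.

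Now suppose that $p=2$. Here $\sigma$ has order $3$, which is coprime to $p$, so $\sigma$ is semisimple and $C=C_{G^0}(\sigma)\cong G_2$; moreover $A$ is linearly reductive, hence $G$-cr, so $C$ is $G^0$-cr by Lemma~\ref{lem:2_nec_cond}(ii). Since $G_2$ has no outer automorphisms in characteristic $2$ (the exceptional isogeny of $G_2$ occurs only in characteristic $3$), the automorphism $\tau|_C$ is given by $\Int_g$ for a unique $g\in G_2$, and $\tau^2=1$ forces $g^2\in Z(G_2)=1$, so $g$ is a unipotent involution. Then $C_{G^0}(H)=C_{G_2}(g)$; as a nontrivial unipotent involution of $G_2$ has non-reductive centralizer, the reductivity hypothesis forces $g=1$, i.e.\ $\tau$ centralizes $C=G_2$. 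Thus $C_{G^0}(H)=G_2$ is $G^0$-cr, and I may apply Lemma~\ref{lem:second_reduction}: the group $M=H\,C_{G^0}(C_{G^0}(H))=H\,C_{D_4}(G_2)$ is reductive and $H$ is $G$-cr if and only if $H$ is $M$-cr. One checks that $C_{D_4}(G_2)^0$ is a torus (indeed $C_{D_4}(G_2)$ is finite), so $R_u(P_\lambda(M))\subseteq M^0$ is trivial for every $\lambda\in Y(M)$; hence every R-parabolic subgroup of $M$ coincides with one of its R-Levi subgroups and $H$ is trivially $M$-cr. Therefore $H$ is $G$-cr, as required.

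The main obstacle is the type~II case in characteristic $3$, namely showing that $C_{M_2}(\tau)$ is non-reductive. The plan is to exhibit a nontrivial connected unipotent subgroup of $R_u(M_2)$ fixed by $\tau$: such a subgroup is normal in $C_{M_2}(\tau)$ and precludes reductivity. The natural candidate is the central root subgroup $U_{3\tilde\alpha+2\tilde\beta}=Z(M_2)^0\cong\mathbb{G}_a$ arising from Proposition~\ref{prop:triality}(v) (note $\langle 3\tilde\alpha+2\tilde\beta,\tilde\alpha^\vee\rangle=0$, so this root group is indeed central in $M_2$). Since $\langle\tau\rangle$ is linearly reductive in characteristic $3$, one has $\dim C_{R_u(M_2)}(\tau)=\dim\bigl(\Lie R_u(M_2)\bigr)^\tau$, so the task is precisely to show that $\tau$ does not act as $-1$ on the whole $5$-dimensional space $\Lie R_u(M_2)$. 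I expect this to come from a direct computation with the explicit triality data: the relation $\tau\sigma_2\tau^{-1}=\sigma_2^{-1}$, together with $\sigma_2=\sigma_1 u$ and $u\in Z(M_2)^0$, pins $\tau$ down up to inner automorphisms of $M_2$ and lets one track its action on the five root vectors listed in Proposition~\ref{prop:triality}(v), producing a nonzero fixed vector. This bookkeeping with structure constants, rather than any conceptual difficulty, is the crux of the argument.
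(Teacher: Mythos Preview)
Your strategy matches the paper's in outline (reduce to $G=\Aut(D_4)$, split on $p$, and in characteristic~$3$ reduce via Theorem~\ref{thm:normal}(ii) and Theorem~\ref{thm:cyclic} to the type~II centralizer question), but there are two genuine gaps.

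\medskip
\textbf{The $p=3$, type~II computation.} You correctly identify that everything comes down to showing $C_{M_2}(\tau)$ is non-reductive, but your plan (``pin $\tau$ down up to inner automorphisms of $M_2$ and track the five root vectors'') is not carried out, and as stated it is not clear how to execute it: $\tau$ need not preserve the individual root groups of $R_u(M_2)$. The paper's device is to pass to the ambient $M_1=G_2$. After conjugating so that $h=\sigma u$, one writes the involution $s\in H$ as $s=\tau_0\, t\, x$ with $\tau_0$ the graph involution, $t=\tilde\beta^\vee(-1)\in\tilde T$, and $x\in D_4$; the $S_3$ relation $s h s^{-1}=h^{-1}$ together with the analogous relation for $\tau_0 t$ forces $x\in C_{D_4}(\sigma u)=M_2\subseteq M_1$. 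Hence $s$ normalizes $M_1$, and since $\Out(G_2)=1$ there is an involution $s'\in M_1$ with $\Int(s)|_{M_1}=\Int(s')|_{M_1}$, so $C_{G^0}(H)=C_{M_2}(s')$. One then checks $s'$ normalizes $P_{\tilde\lambda}=N_{M_1}(M_2)$, conjugates $s'$ into $\tilde T$, and is left with only three possible involutions $\tilde\alpha^\vee(-1)$, $\tilde\beta^\vee(-1)$, $\tilde\alpha^\vee(-1)\tilde\beta^\vee(-1)$, each of which visibly centralizes one of $U_{3\tilde\alpha+2\tilde\beta}$, $U_{\tilde\beta}$, $U_{\tilde\alpha+\tilde\beta}\subseteq R_u(P_{\tilde\lambda})$. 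This reduction to a torus element of $G_2$ is the missing idea.

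\medskip
\textbf{The $p=2$ case.} You assert $C=C_{G^0}(\sigma)\cong G_2$, but for $p\neq 3$ there are \emph{two} $\Aut(D_4)$-classes of order-$3$ outer automorphisms, with fixed-point groups $G_2$ (type~I) and $\mathrm{PGL}_3$ (type~II); you have not excluded the latter for your $\sigma$. If $C\cong\mathrm{PGL}_3$ then $\Out(C)\neq 1$, your claim that $\tau|_C$ is inner fails, and the rest of the argument collapses. The paper sidesteps this entirely: for $p\neq 3$ it uses only that $C_{G^0}(K)$ is connected reductive, applies \cite[Thm.~3.1(b)(ii)]{BMR2} to $K\trianglelefteq H\subseteq M:=HC_{G^0}(K)$ to pass from $G$ to $M$, and then Theorem~\ref{thm:normal}(i) and Theorem~\ref{thm:cyclic} (for the cyclic group $H/K$ of order~$2$) give $H$ $G$-cr $\Leftrightarrow$ $C_{(M/K)^0}(H/K)\cong C_{G^0}(H)$ reductive, with no case analysis on the isomorphism type of $C$.
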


\begin{proof}
The forward implication is clear by Lemma \ref{lem:2_nec_cond}. 
Conversely, assume that $H$ is not $G$-cr.  Let $h \in H$ be an element of order $3$, so that
$K = \langle h \rangle$ is a normal subgroup of index $2$ in
$H$.  By the assumption on $H$,
the map $\pi:G \rightarrow \Aut(D_4), g \mapsto \Int(g)|_{G^0}$ is
surjective. Since $\ker(\pi)=C_G(G^0)$, $\pi$ is an
isogeny.
Hence $\pi(H)$ is not $\Aut(D_4)$-cr, by
Lemma \ref{lem:non-degenerate}.  It now follows from Remark~\ref{rem:isog} that we can take $G$ to be $\Aut(D_4)$.

First assume that $p=3$. 
Let $M_1 = C_{D_4}(\sigma)$
and $M_2 = C_{D_4}(\sigma u)$ with notation as in Proposition
\ref{prop:triality}.  Then $K$
is a normal subgroup of order $3$ and index $2$ in $H$.
Since $p=3$ is coprime to $2$, we have by Theorem \ref{thm:normal}(ii) that 
$K$ is not $\Aut(D_4)$-cr.
This implies (by Theorem \ref{thm:geometric})
that the orbit $\Aut(D_4)\cdot h$ is not
closed, whence by Proposition \ref{prop:triality} there exists
$g \in G$ with $g h g^{-1} = \sigma u$.
Replacing $H$ with $gHg^{-1}$, we may assume
that $h = \sigma u$.
Let $s \in H$ be an element of order $2$ such that $h$ and $s$
generate $H$.
Let $\tau \in \Aut(D_4)$ be the graph automorphism of order $2$
determined by $s$, i.e.,\ the graph automorphism that
induces the same element as $s$ in $\Out(D_4)$.
Let $t = \tilde{\beta}^\vee(-1) \in \tilde{T} \subseteq M_1$.
Since $\tau$ and $\sigma$ fix $M_1$, both elements commute
with $t$ and $u$.
Moreover, by construction $t u t = u^{-1}$.
This implies that $\tau t$ has order $2$ and
$(\tau t)(\sigma u) (\tau t) = \tau \sigma \tau u^{-1} =
\sigma^{-1} u^{-1} = (\sigma u)^{-1}$.
As $\tau t$ induces the same element as $s$ in $\Out(D_4)$,
we can find $x \in D_4$ with $s = \tau t x$.
We conclude that both pairs of elements 
$h = \sigma u, s = \tau t x$ as well as
$\sigma u, \tau t$ generate a group isomorphic to $S_3$.
In particular,
$x (\sigma u) x^{-1} = (\tau t)(\tau t x)(\sigma u)(\tau t x)^{-1}(\tau t)
= (\tau t) (\sigma u)^{-1} (\tau t) = \sigma u$.
Thus $x \in M_2 \subseteq M_1$ (cf. Proposition \ref{prop:triality}(v)),
so that $s = \tau t x$ normalizes $M_1$.
As $M_1$ is simple of type $G_2$, it has no outer automorphisms.
Therefore we may find $s' \in M_1$ with
$\Int(s)|_{M_1} = \Int(s')|_{M_1}$.
Since $M_1$ is adjoint, $s'$ is of order $2$.
Now
\begin{equation}
\label{eq:a}
C_{G^0}(H) = C_{M_2}(s').
\end{equation}
Since $s = \tau t x$ normalizes $M_2$, $s$ normalizes $N_{M_1}(M_2)= P_{\tilde{\lambda}}$ (see Proposition \ref{prop:triality}(v)).
Hence $s' \in P_{\tilde{\lambda}}$
and $C_{M_2}(s') \subseteq P_{\tilde{\lambda}}$.
Up to conjugation in $P_{\tilde{\lambda}}$ we may thus
assume $s' \in \tilde{T}$. As $\tilde{T}$ is generated
by the images of $\tilde{\alpha}^\vee$ and $\tilde{\beta}^\vee$,
this reduces the possibilities to
$s' \in \{\tilde{\alpha}^\vee(-1),
\tilde{\beta}^\vee(-1),
\tilde{\alpha}^\vee(-1)\tilde{\beta}^\vee(-1)\}$.
But then $s'$ centralizes
$U_{3\tilde{\alpha}+2\tilde{\beta}}$,
or $U_{\tilde{\beta}}$, or $U_{\tilde{\alpha}+\tilde{\beta}}$ 
respectively.
We deduce that
$C_{M_2}(s') \cap R_u(P_{\tilde{\lambda}})$ is positive-dimensional
in each case. Thus $C_{M_2}(s')$ is not reductive, as required.

Now let $p \neq 3$.
Then the subgroup $K$ of $H$ of order $3$ is linearly reductive,
in particular it is $G$-cr and $C_{G^0}(K)$ is reductive. 
Moreover, the group $C_{G^0}(K)$ is connected
being the fixed point group under a triality automorphism
(cf.\ \cite[\S9]{CKT}).
Let $M = H C_{G^0}(K)$.
By \cite[Thm.\ 3.1(b)(ii)]{BMR2} applied to $K \subseteq H \subseteq M$,
we deduce that $H$ is not $M$-cr.
Since $K$ is normal in $M$, by 
Theorem \ref{thm:normal}(i),
$H/K$ is not $M/K$-cr.
But $H/K$ is cyclic of order $2$, so we may apply
Theorem \ref{thm:cyclic}
to conclude that $C_{(M/K)^0}(H/K)$ is not reductive. 
By construction,
$(M/K)^0 \cong C_{G^0}(K)$ and 
\begin{equation}
\label{eq:b}
C_{(M/K)^0}(H/K) \cong C_{G^0}(H).
\end{equation}
This finishes the proof.
\end{proof}

Having settled the case of $D_4$, we can combine
Theorems \ref{thm:cyclic} and \ref{thm:D4}
to characterize $G$-complete reducibility 
in case $G^0$ is simple and the
subgroup $H$ acts by outer automorphisms.

\begin{cor} \label{cor:outs_on_simple}
Let $H \subseteq G$ be a subgroup acting on $G^0$
by outer automorphisms.
Assume that $G^0$ is simple.
Then $H$ is $G$-completely reducible if and only if $C_{G^0}(H)$ is 
reductive.
\end{cor}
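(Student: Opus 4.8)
The plan is to reduce everything to the two results already established, Theorems~\ref{thm:GM} and~\ref{thm:D4}, by splitting on the isomorphism type of $H$. Since $G^0$ is simple and $H$ acts on $G^0$ by outer automorphisms, $H$ maps isomorphically onto its image in $\Out(G^0)$ under the natural map $G\to\Out(G^0)$ (this is exactly the outer-action hypothesis; in particular $H\cap G^0=1$, since conjugation by an element of $G^0$ is inner). As recalled in the introduction, $\Out(G^0)$ is cyclic for every simple type except $D_4$, where $\Out(G^0)\cong S_3$. Hence $H$ is cyclic unless $G^0$ is of type $D_4$ and $H\cong S_3$, the unique non-cyclic subgroup of $S_3$.

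If $H$ is cyclic, there is nothing further to do: the asserted equivalence is precisely Theorem~\ref{thm:GM}, which imposes no condition on $G^0$ beyond reductivity.

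It remains to treat the case where $G^0$ is of type $D_4$ and $H\cong S_3$. Here Theorem~\ref{thm:D4} yields the claim when $G^0$ is the \emph{adjoint} group of type $D_4$, so I would pass to the adjoint quotient by an isogeny. Put $Z=Z(G^0)$, a finite characteristic subgroup of $G^0$ and hence normal in $G$, and let $\pi\colon G\to G':=G/Z$ be the quotient map, so that $G'^0=G^0/Z$ is adjoint of type $D_4$ and $\ker\pi=Z$ is finite. Since $\pi$ is an isogeny it is non-degenerate, so by Lemma~\ref{lem:non-degenerate} $H$ is $G$-completely reducible if and only if $\pi(H)$ is $G'$-completely reducible. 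As $H\cap G^0=1$, the map $\pi$ is injective on $H$ and $\pi(H)\cong S_3$; moreover $\pi(H)$ still acts on $G'^0$ by outer automorphisms, because an automorphism of $G^0$ is inner precisely when the automorphism it induces on the adjoint quotient is inner. Applying Theorem~\ref{thm:D4} to $\pi(H)\subseteq G'$ shows that $\pi(H)$ is $G'$-completely reducible if and only if $C_{G'^0}(\pi(H))$ is reductive. Finally, Remark~\ref{rem:isog} applied to $\pi$ gives that $C_{G'^0}(\pi(H))^0$ is reductive if and only if $C_{G^0}(H)^0$ is; since a linear algebraic group is reductive exactly when its identity component is, this upgrades to the equivalence of reductivity of $C_{G'^0}(\pi(H))$ and of $C_{G^0}(H)$. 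Chaining the three equivalences delivers the corollary.

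The substantive content lives entirely in Theorems~\ref{thm:GM} and~\ref{thm:D4}; the only point requiring care is the isogeny step in the $D_4$, $S_3$ case, where I must simultaneously transport $G$-complete reducibility (via non-degeneracy and Lemma~\ref{lem:non-degenerate}) and reductivity of the centralizer (via Remark~\ref{rem:isog}), and verify that passage to the adjoint quotient preserves both the isomorphism type of $H$ and its outer action so that Theorem~\ref{thm:D4} applies. I do not expect any genuine obstacle beyond this bookkeeping.
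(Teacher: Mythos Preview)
Your proof is correct and follows the same overall strategy as the paper: split into the cyclic case and the $D_4$/$S_3$ case, and for the latter pass to the adjoint quotient via an isogeny (transporting $G$-complete reducibility via Lemma~\ref{lem:non-degenerate} and reductivity of the centralizer via Remark~\ref{rem:isog}) so that Theorem~\ref{thm:D4} applies. The one substantive difference is in the cyclic case: you invoke Theorem~\ref{thm:GM} (Guralnick--Malle) directly, whereas the paper first reduces to the adjoint group in \emph{all} cases and then applies its own Theorem~\ref{thm:cyclic}. Once $G^0$ is adjoint, $\Out(G^0)$ has order $1$, $2$, $3$, or is $S_3$, so every nontrivial cyclic $H$ has prime order and Theorem~\ref{thm:cyclic} suffices. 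The paper's route is deliberately self-contained---it avoids the case-by-case analysis underlying Theorem~\ref{thm:GM}, in keeping with the remark preceding Theorem~\ref{thm:cyclic}---while your route is slightly shorter since Theorem~\ref{thm:GM} handles arbitrary cyclic $H$ without any preliminary reduction.
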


\begin{proof}
We may assume that $G^0$ is adjoint (cf.~the first paragraph of the proof of Theorem~\ref{thm:D4}).
Since $H$ acts via outer automorphisms,
we may identify it as an abstract group 
with a subgroup of $\Out(G^0)$, the finite group of 
outer automorphisms of $G^0$.
As $G^0$ is simple, $\Out(G^0)$ is either simple of prime order
or $G^0$ is of type $D_4$ and $\Out(G^0) \cong S_3$.
The result now follows
from Theorems \ref{thm:cyclic} and \ref{thm:D4}.
\end{proof}

\begin{rem}
In the situation of Corollary \ref{cor:outs_on_simple},
we always have $\dim C_{G^0}(H) > 0$. This follows
again from the theorem of Steinberg (\cite[Thm.\ 10.13]{steinberg}) in 
case $H$ is cyclic. The general case follows from the
identities for $C_{G^0}(H)$ in \eqref{eq:a} and \eqref{eq:b}.
\end{rem}

\section{The Algorithm}

We return to the general situation where
$H \subseteq G$ is a subgroup of a 
possibly non-connected reductive group.
In this section, we are going to establish
an algorithm
that reduces the question of whether $H$ is $G$-cr
to the question of whether certain subgroups of certain
\emph{connected} reductive groups are $G$-completely reducible.

We start with a proposition that recasts some of
our earlier results as operations for a potential
algorithm. We say that the pair $(H,G)$ is \emph{completely
reducible} provided that $H$ is $G$-completely reducible.

\begin{prop} \label{prop:operations}
Let $(H,G)$ be a pair consisting of a reductive group $G$ and 
a subgroup $H \subseteq G$. Then each of the following operations
replaces $(H,G)$ with pairs of the same form (i.e.,\ consisting
of a group and a reductive group containing it as a subgroup):
\begin{itemize}
\item[(O1)]
Let $\pi: G \rightarrow G/C_G(G^0)$ be the canonical projection.
Replace $(H,G)$ with $(\pi(H),\pi(G))$.
\item[(O2)]
Let $G^0=SG_1\cdots G_n$ be the decomposition as in 
\eqref{eq:components}, and let $H_i$, $\pi_i$ for $1 \leq i \leq n$
be defined as in Lemma \ref{lem:reduce_to_simple}.
Replace $(H,G)$ with the pairs 
$$(\pi_1(H_1),\pi_1(H_1G^0)),
\dots ,(\pi_n(H_n),\pi_n(H_nG^0)).$$
\item[(O3)]
If $H \cap G^0$ is $G^0$-completely reducible, replace $(H,G)$ with
$$(H/(H\cap G^0), HC_{G^0}(H\cap G^0)/ (H\cap G^0)).$$
\end{itemize}
Moreover, $H$ is $G$-completely reducible if and only if
each of the pairs obtained through one of these operations
is completely reducible.
\end{prop}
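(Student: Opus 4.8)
The plan is to verify that each operation (O1), (O2), (O3) produces pairs of the claimed form, and then to verify the ``complete reducibility is preserved'' assertion by citing the appropriate earlier reduction lemma for each operation in turn. Since the three operations correspond almost exactly to Corollary~\ref{cor:quotient}, Lemma~\ref{lem:reduce_to_simple}, and the combination of Lemma~\ref{lem:first_reduction} with Remark~\ref{rem:O3}, the bulk of the work is really bookkeeping: matching each operation to the statement that justifies it and checking that the output objects are genuinely reductive groups with the given subgroup sitting inside.

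First I would treat (O1). The target $G/C_G(G^0)$ is reductive because it is a quotient of a reductive group by a normal subgroup (indeed $\pi(G)=G/C_G(G^0)$ is the full target since $\pi$ is surjective), and $\pi(H)$ is clearly a subgroup of $\pi(G)$, so the pair has the correct form. The equivalence ``$H$ is $G$-cr iff $\pi(H)$ is $\pi(G)$-cr'' is exactly Corollary~\ref{cor:quotient}, so nothing further is needed. Next, for (O2), each $\pi_i(H_i G^0)$ is reductive since it is the image of the reductive group $H_i G^0$ under the (well-defined) projection $\pi_i$, as already noted in the proof of Lemma~\ref{lem:reduce_to_simple}, and $\pi_i(H_i)$ is a subgroup of it; this gives the correct form. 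The ``if and only if'' is the content of Lemma~\ref{lem:reduce_to_simple}, except that operation (O2) lists \emph{all} indices $1\le i\le n$ rather than a set $I$ meeting each $H$-orbit. I would remark that replacing the full index set by a transversal $I$ of the $H$-orbits changes nothing, since for $i$ and $j$ in the same $H$-orbit the pairs $(\pi_i(H_i),\pi_i(H_iG^0))$ and $(\pi_j(H_j),\pi_j(H_jG^0))$ are isomorphic (conjugate by the element $h$ with $h\cdot G_i=G_j$), so complete reducibility of one is equivalent to that of the other; thus requiring it for all indices is equivalent to requiring it for those in $I$, and Lemma~\ref{lem:reduce_to_simple} applies.

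Finally, for (O3), under the hypothesis that $H\cap G^0$ is $G^0$-cr, Lemma~\ref{lem:first_reduction} tells us that $M=HC_{G^0}(H\cap G^0)$ is reductive and that $H$ is $G$-cr iff $H$ is $M$-cr. The normal subgroup $N=H\cap G^0$ of $M$ is contained in $G^0$, so the quotient $M/N$ is again reductive (it is a quotient of the reductive group $M$ by a normal subgroup), and $H/N$ is a subgroup of it; this is the output pair, so it has the correct form. The equivalence ``$H$ is $M$-cr iff $H/N$ is $M/N$-cr'' is precisely Remark~\ref{rem:O3} (an application of Theorem~\ref{thm:normal}(i) with $N$ normal in both $H$ and $M$). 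Chaining the two equivalences gives that $H$ is $G$-cr iff $H/(H\cap G^0)$ is $M/(H\cap G^0)$-cr, which is the assertion for (O3).

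The step I expect to need the most care is (O2), specifically reconciling the statement's use of all $n$ indices with Lemma~\ref{lem:reduce_to_simple}'s use of a transversal $I$; the point is to justify cleanly that the pairs attached to indices in a common $H$-orbit carry equivalent complete-reducibility data, so that testing all indices is neither stronger nor weaker than testing a transversal. The other two operations are direct invocations of results already established, so they should require only the form-checking described above.
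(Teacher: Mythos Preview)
Your proposal is correct and follows essentially the same approach as the paper, which simply cites Corollary~\ref{cor:quotient}, Lemma~\ref{lem:reduce_to_simple}, and Remark~\ref{rem:O3} for (O1), (O2), and (O3) respectively. Your extra care in (O2) is unnecessary: the hypothesis of Lemma~\ref{lem:reduce_to_simple} only requires that $I$ meet each $H$-orbit, and the full index set $\{1,\dots,n\}$ is itself such a subset, so the lemma applies directly without any reconciliation argument.
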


\begin{proof}
The results follow from
Corollary \ref{cor:quotient},
Lemma \ref{lem:reduce_to_simple}
and Remark \ref{rem:O3}.
\end{proof}

\begin{rem} \label{rem:step2_I}
In the situation of (O2), suppose we are given a set $I$ as in
Lemma \ref{lem:reduce_to_simple}. 
Then it is enough to replace
$(H,G)$ with the pairs $(\pi_i(H_i),\pi_i(H_iG^0))_{i\in I}$ in
(O2).
\end{rem}

We are now in a position to give an algorithm
that determines whether $H$ is $G$-completely reducible.

\begin{thm} \label{thm:algorithm}
Let $H$ be a subgroup of $G$. The following algorithm,
starting with the pair $(H,G)$,
reduces the question of whether $H$ is $G$-completely reducible
in a finite number of steps to questions of complete
reducibility in connected groups:

\begin{Alg}
Input: a pair $(H',G')$, where $H'$ is a subgroup of a reductive
group $G'$.
\begin{itemize}
\item[Step 1] 
If $G'^0$ is not simple, or $C_{G'}(G'^0)\neq 1$,
apply (O2) and then (O1) to each of the newly
obtained pairs. Restart instances of the algorithm for each of the new
pairs. Then $H'$ is $G'$-cr if and only if each of these pairs turns
out to be completely reducible.

\item[Step 2]
Identify $H'/(H' \cap G^0)$ with a subgroup of $\Out(G'^0)$.
Let $n \in \{1,2,3,6\}$ be the order of $H'/(H' \cap G'^0)$.
If $p$ does not divide $n$, $H'$ is $G'$-cr
if and only if $H' \cap G'^0$ is $G'^0$-cr,
and the algorithm stops.

\item[Step 3]
If $H' \cap G'^0$ is not $G'^0$-cr,
the algorithm stops with the conclusion that $H'$ is not $G'$-cr.

\item[Step 4]
If $H' \cap G'^0 = 1$, 
$H'$ is $G'$-cr if and only if
$C_{G'^0}(H')$ is reductive. The algorithm stops.

\item[Step 5]
If $1 \neq H'/(H' \cap G'^0) \not\cong S_3$,
let $M = H' C_{G'^0}(H' \cap G'^0)/(H' \cap G'^0)$.
Then $H'$ is $G'$-cr
if and only if
$C_{M^0}(H'/(H'\cap G'^0))$ is $M^0$-cr. The algorithm stops.

\item[Step 6]
If $H'/(H' \cap G'^0) \cong S_3$,
apply (O3) and
restart the algorithm with the new pair.
\end{itemize}
\end{Alg}
\end{thm}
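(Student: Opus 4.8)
The plan is to establish two things: that the algorithm is \emph{correct} (each stopping rule returns the right verdict, and each restart replaces the question by an equivalent one) and that it \emph{terminates}. Correctness is a step-by-step bookkeeping exercise using the results already assembled, while termination will follow from the single monovariant $\dim G'^0$. I would first dispose of Step 1 using Proposition \ref{prop:operations}: applying (O2) and then (O1) yields finitely many pairs, and $H'$ is $G'$-cr if and only if each of them is completely reducible. The point to record is that after these operations every new pair has simple identity component and trivial $C_{G'}(G'^0)$ -- indeed, after (O1) the identity component is $G'^0/Z(G'^0)$, which is \emph{adjoint} -- so that a branch re-entering Step 1 immediately falls through to Step 2. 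From Step 2 onwards I may therefore assume $G'^0$ is simple and adjoint with $C_{G'}(G'^0)=1$; then $H'/(H'\cap G'^0)$ embeds in $\Out(G'^0)$, and simplicity forces its order $n$ into $\{1,2,3,6\}$, with $n\in\{3,6\}$ (and the $S_3$ case) occurring only in type $D_4$.

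For the individual stopping rules I would argue as follows. In Step 2, when $p\nmid n$ the quotient $H'/(H'\cap G'^0)$ is finite of order prime to $p$, hence linearly reductive, so Theorem \ref{thm:normal}(ii) gives that $H'$ is $G'$-cr if and only if $H'\cap G'^0$ is $G'$-cr; and for a subgroup of $G'^0$ the notions of being $G'$-cr and $G'^0$-cr coincide -- the forward direction is Lemma \ref{lem:2_nec_cond}(i), the converse because any Levi subgroup of $P_\lambda^0$ lies in an R-Levi subgroup of $P_\lambda$ by Lemma \ref{lem:normalizer}. Step 3 is the contrapositive of Lemma \ref{lem:2_nec_cond}(i). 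Having survived Step 3 we know $H'\cap G'^0$ is $G'^0$-cr, so (O3) is legitimate; Step 4 (the case $H'\cap G'^0=1$, in which $H'$ acts on $G'^0$ by outer automorphisms) is exactly Corollary \ref{cor:outs_on_simple}; Step 5 feeds Remark \ref{rem:O3} (reducing to the pair $(H'/(H'\cap G'^0),M)$ with $M^0$ connected reductive) into Theorem \ref{thm:cyclic} for the cyclic prime-order group $H'/(H'\cap G'^0)$, then rewrites reductivity of $C_{M^0}(H'/(H'\cap G'^0))$ as its $M^0$-complete reducibility via Corollary \ref{cor:cent_of_cyclic}; and Step 6 is (O3) of Proposition \ref{prop:operations} combined with an induction on the restarted pair. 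In each stopping rule the surviving question -- $G'^0$-complete reducibility of $H'\cap G'^0$, $M^0$-complete reducibility of a centralizer, or reductivity of a centralizer -- concerns only the \emph{connected} group $G'^0$ or $M^0$, which is precisely what the theorem asserts.

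For termination I would track $\dim G'^0$. Operations (O2) and (O1) do not increase it, and by the observation above Step 1 fires at most once before a branch settles into the simple-adjoint, $C_{G'}(G'^0)=1$ regime. The only other restart is Step 6, which occurs with $G'^0$ adjoint simple of type $D_4$ and $1\neq H'\cap G'^0$ necessarily noncentral (as $Z(G'^0)=1$). Writing $C=C_{G'^0}(H'\cap G'^0)$ and $M$ for the group produced by (O3), a short computation gives $\dim M^0\le \dim C<\dim G'^0$, the strict inequality holding because the centralizer of a nontrivial subgroup of a simple adjoint group is proper. Hence each pass through Step 6 strictly decreases the non-negative integer $\dim G'^0$; since Steps 1 and 6 branch only finitely and the work between restarts is bounded, the recursion tree is finite and the algorithm halts.

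The step I expect to be the main obstacle is the termination analysis, precisely because the algorithm restarts in two distinct places. The delicate points are to rule out an infinite regress at Step 1 (this is exactly why it matters that (O1) and (O2) leave a branch in the simple, adjoint, $C_{G'}(G'^0)=1$ state, so that Step 1 never re-triggers) and to verify that Step 6's application of (O3) genuinely drops $\dim G'^0$ -- that is, that $C_{G'^0}(H'\cap G'^0)$ is a \emph{proper} subgroup and that passing to the quotient $M^0$ does not re-inflate the dimension. A secondary bookkeeping obstacle is to confirm that the pair produced by (O3) again consists of a reductive group together with a subgroup whose finite quotient embeds in the outer automorphisms of the new identity component, so that the inductive hypothesis applies to it verbatim.
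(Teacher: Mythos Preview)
Your proposal is correct and follows essentially the same route as the paper's own proof: step-by-step justification via Proposition~\ref{prop:operations}, Theorem~\ref{thm:normal}(ii), Lemma~\ref{lem:2_nec_cond}(i), Corollary~\ref{cor:outs_on_simple}, Remark~\ref{rem:O3} with Theorem~\ref{thm:cyclic}, and (O3) again for Step~6, together with a dimension-drop argument for termination. In a couple of places you are in fact more explicit than the paper---you spell out the passage from ``$G'$-cr'' to ``$G'^0$-cr'' in Step~2 using Lemma~\ref{lem:normalizer}, and in Step~5 you invoke Corollary~\ref{cor:cent_of_cyclic} to convert ``reductive'' into ``$M^0$-cr''---whereas the paper simply cites Theorem~\ref{thm:normal}(ii) and Theorem~\ref{thm:cyclic} respectively.

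The only noteworthy difference is in the termination argument. You use $\dim G'^0$ as a monovariant that never increases under Step~1 and strictly decreases under Step~6, which is perfectly valid. The paper sharpens this: since Step~6 requires $G'^0$ of type~$D_4$ (dimension~$28$) and the output of (O3) has strictly smaller dimension, after re-running Step~1 no simple factor can be~$D_4$, so Step~6 fires \emph{at most once} along any branch. Your argument gives termination just as well; the paper's version additionally bounds the depth of recursion explicitly. Your closing worry about whether the pair produced by (O3) has the right shape for the inductive hypothesis is unnecessary: the algorithm simply restarts at Step~1, which re-establishes the simple, adjoint, $C_{G'}(G'^0)=1$ regime before Steps~2--6 are reached again.
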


\begin{proof}
Step 1 is covered by Proposition \ref{prop:operations}.
Moreover, each of the new pairs $(H'',G'')$
produced in this step
satisfies $G''^0$ simple and $C_{G''}(G''^0) = 1$, so 
that the algorithm moves on to Step 2 after a possible application of Step 1.

From Step 2 on, we may assume that $G'^0$ is simple and $C_{G'}(G'^0) = 1$.
This allows us to identify $H'/(H' \cap G'^0)$ with
a subgroup of $\Out(G'^0)$, and yields the constraints on its order.
If $p$ does not divide $n$, the quotient $H'/(H' \cap G'^0)$ is
linearly reductive.
By Theorem \ref{thm:normal}(ii), $H'$ is indeed $G'$-cr if
and only if $H'\cap G'^0$ is $G'^0$-cr.

From Step 3 on, we may assume in addition that $p \in \{2,3\}$
and that $H$ is not contained in $G^0$.
The conclusion of Step 3 is correct by Lemma \ref{lem:2_nec_cond}(i).

Step 4 is an application of Corollary \ref{cor:outs_on_simple}.

Since we have passed Step 3, we may assume that $H' \cap G'^0$
is $G'^0$-cr.
Under the condition of Step 5, $H'/(H'\cap G'^0)$ is cyclic of prime
order. The conclusion of Step 5 thus follows from
Remark \ref{rem:O3} and Theorem \ref{thm:cyclic}.

Finally, Step 6 is again covered by Proposition \ref{prop:operations}.
Moreover, this step is only applicable for $G'^0$ simple of type $D_4$.
As we may assume $H' \cap G'^0 \neq 1$ and $Z(G'^0)=1$, the group 
$M = H'C_{G'^0}(H' \cap G'^0)/(H' \cap G'^0)$ featuring in (O3)
satisfies $\dim M < \dim G'$.

It remains to show that the algorithm terminates. 
Step 1 may restart finitely many instances of the algorithm. In each instance
the algorithm terminates in Step 2 -- Step 5 if Step 6 is not reached.
If Step 6 is applicable, it replaces $G'$---which is simple of type $D_4$---with a group of smaller
dimension. This implies that after Step 1 is applied again, Step 6 cannot 
be reached a second time, and the algorithm terminates.
\end{proof}

\begin{rem}
(i). It follows from the proof of Theorem \ref{thm:algorithm}
that Step 1, the only step that replaces a pair with several new pairs,
need only be done at most twice along a path through the algorithm. Also,
Step 6 only occurs at most once.

(ii). There are some situations where shortcuts may be applied to reduce
to a connected group. 
First of all, if $H^0$ is not reductive, then $H$ cannot be $G$-cr.
On the other hand, if $H$ is cyclic, then we may apply 
Theorem \ref{thm:GM} to deduce that $H$ is $G$-cr if and only if 
$C_{G^0}(H)$ is $G^0$-cr. Finally, if $H/(H \cap G^0)$
is linearly reductive, we can apply Theorem \ref{thm:normal}(ii) to deduce
that $H$ is $G$-cr if and only if $H \cap G^0$ is $G^0$-cr.
However, the proposed algorithm gives a systematic approach that deals with
all possible cases.

(iii). If $p=0$, then a subgroup $H$ is $G$-cr if and only if it
is reductive (\cite[Prop.\ 4.2]{serre2}).
Of course, $H$ is reductive if and only if
$H^0$ is reductive, which in turn is equivalent to $H^0$ being
$G^0$-completely reducible.
\end{rem}

\section{Examples}
\label{sec:ex}

We conclude 
with some examples of the algorithm outlined in
Theorem \ref{thm:algorithm}.

\begin{exmp}
Let $p=3$, $G = \Aut(D_4)$. Let $\sigma$ be the triality
graph automorphism as in Section \ref{sec:D4}.
Let $H = \langle \sigma \rangle K$, where
$K = C_{D_4}(\sigma)$ is the fixed point subgroup of type $G_2$.
We follow through the algorithm to deduce that $H$ is $G$-cr:

Step 1 is not applicable, as $G^0 = D_4$ is simple and
$C_G(G^0) = 1$.
In Step 2 we obtain $n=3=p$ as the order of
$\langle \sigma \rangle \cong H/(H \cap G^0)$.
Now $H \cap G^0 = K$ is $G$-cr
(see Corollary \ref{cor:cent_of_cyclic}),
hence Steps 3 and 4 are not applicable.
Step 5 applies and leads us to consider the group 
$M = HC_{D_4}(K)/K \cong \langle \sigma \rangle C_{D_4}(K)$.
As $K$ is adjoint, we obtain $C_{M^0}(\sigma) = 1$ and thus
this group is clearly $M^0$-cr.
The algorithm stops with the conclusion that $H$ is $G$-cr.

Here we have two commuting $G$-cr subgroups $\langle \sigma \rangle$ and $K$ of $G$ and their product is also $G$-cr.  This is not always the case: see \cite[Ex.~5.1]{BMR2}.
\end{exmp}

\begin{exmp}
Let $\Gamma$ be a finite group acting
transitively on a finite set $I$. Let $i_0 \in I$.
Let $\rho:\Gamma \rightarrow M$ be a homomorphism 
to a simple group $M$ such that $\rho(C_\Gamma(i_0))$ is 
not $M$-cr.
We set
\begin{equation*}
G = \Gamma \ltimes \prod_{i \in I} M,
\end{equation*}
where $\Gamma$ acts on the product by permuting the indices.
Clearly, $G^0 = \prod_{i} M$.
Let $d:M \rightarrow G^0$ be the diagonal embedding.
As $\Gamma$ commutes with the image of $d$, we may 
form the subgroup
\begin{equation*}
H = \{ \gamma d(\rho(\gamma)) \mid \gamma \in \Gamma \},
\end{equation*}
a finite subgroup of $G$. 
We claim that $H$ is not $G$-cr, and use our algorithm to prove it.

To apply Step 1, we compute that for $i \in I$, $N_G(G_i) 
= C_\Gamma(i) \ltimes G^0$. 
In particular, $H_i = \{\gamma d(\rho(\gamma))
\mid \gamma \in C_\Gamma(i) \}$.
We obtain
\begin{equation*}
\pi_i(H_i G^0) =H_iG^0 / \prod_{j \neq i} G_j \cong C_\Gamma(i) \times M
=: G'_i,
\end{equation*}
and correspondingly
\begin{equation*}
\pi_i(H_i) \cong \{\gamma \rho(\gamma) \mid \gamma \in C_\Gamma(i) \} 
=: H'_i.
\end{equation*}
Since the action of $\Gamma$ on $I$ is transitive, we may by
Remark \ref{rem:step2_I} replace $(H,G)$ with $(H',G')$,
where $G' = G'_{i_0}, H' = H'_{i_0}$.

Now $C_{G'}(G'^0) = C_\Gamma(i_0)$, so Step 1 is again applicable and replaces
$(H',G')$ with the pair $(H'',M)$,
where
\begin{equation*}
H'' = \rho(C_\Gamma(i_0)).
\end{equation*}
By assumption, $H''$ is not $M$-cr and hence $H$ is not $G$-cr, by Step 2.

As a concrete realisation of this example, take
$\Gamma = \PGL_2(q)$ for
$q$ a sufficiently large power of $p$, $I = \PGL_2(q)/B(q)$ where $B$ is a 
Borel subgroup of $\PGL_2$, and consider $I$ as a transitive $\Gamma$-set
by left translation.
Let $M = \PGL_2$, and let $\rho: \Gamma \hookrightarrow M$
the canonical embedding.
If we take $i_0 = B(q)$, then $C_\Gamma(i_0) = B(q)$ is not $M$-cr,
for $q$ large enough, as $B$ is not $M$-cr.
In this example, we have $H \cap G^0 = 1$ and
$C_{G^0}(H) = 1$ (as $C_M(B(q))=1$ for $q$ sufficiently large).
In particular, this example shows that $H$ may fail to be
$G$-cr even if $C_{G^0}(H)$ and $H\cap G^0$ both are $G^0$-cr.
\end{exmp}

%%%%%%%%%%%%%%%%%%%%%%%%%%%%%%%%%%%%%%%%%%%%%%%%%%%%%%%%%%%%%%%%%%%%%%
%%%%%%%%%%%%% Acknowledgments
%%%%%%%%%%%%%%%%%%%%%%%%%%%%%%%%%%%%%%%%%%%%%%%%%%%%%%%%%%%%%%%%%%%%%%

%\bigskip
\bigskip {\bf Acknowledgements}: 
The authors acknowledge the financial support of 
the DFG-priority programme SPP1388 ``Representation Theory''
and Marsden Grants UOC0501, UOC1009 and UOA1021.
Part of the research for this paper was carried out while the
authors were staying at the Mathematical Research Institute
Oberwolfach supported by the ``Research in Pairs'' programme.
The second author acknowledges additional support from
ERC Advanced Grant 291512.
%\bigskip
%%%%%%%%%%%%%%%%%%%%%%%%%%%%%%%%%%%%%%%%%%%%%%%%%%%%%%%%%%%%%%%%%%%%%%
%%%%%%%%%%%%% bibliography
%%%%%%%%%%%%%%%%%%%%%%%%%%%%%%%%%%%%%%%%%%%%%%%%%%%%%%%%%%%%%%%%%%%%%%

\end{document}